\newcommand{\Omit}[1]{\begin{tiny}#1\end{tiny}}
\renewcommand{\Omit}[1]{}
\newbox\mybox
\def\overtag#1#2#3{\setbox\mybox\hbox{$#1$}\hbox to
  0pt{\vbox to 0pt{\vglue-#3\vglue-\ht\mybox\hbox to \wd\mybox
      {\hss$\scriptstyle#2$\hss}\vss}\hss}\box\mybox}
\def\undertag#1#2#3{\setbox\mybox\hbox{$#1$}\hbox to 0pt{\vbox to
    0pt{\vglue#3\vglue\ht\mybox\hbox to \wd\mybox
      {\hss$\scriptstyle#2$\hss}\vss}\hss}\box\mybox}
\def\lefttag#1#2#3{\hbox to 0pt{\vbox to 0pt{\vss\hbox to
      0pt{\hss$\scriptstyle#2$\hskip#3}\vss}}#1}
\def\righttag#1#2#3{\hbox to 0pt{\vbox to 0pt{\vss\hbox to
      0pt{\hskip#3$\scriptstyle#2$\hss}\vss}}#1}
\def\Dot{\lower.2pt\hbox to 3.5pt{\hss$\bullet$\hss}}
\def\Circ{\lower.2pt\hbox to 3.5pt{\hss$\circ$\hss}}
\def\splicediag#1#2{\xymatrix@R=#1pt@C=#2pt@M=0pt@W=0pt@H=0pt}
\renewcommand\frame[2][3pt]{\hbox{$\vcenter{\hbox{\vrule\vbox
{\hrule\kern#1\hbox{\kern#1$#2$\kern#1}\kern#1\hrule}\vrule}}$}}
\newcommand\lineto{\ar@{-}}
\newcommand\dashto{\ar@{--}}
\newcommand\dotto{\ar@{.}}
\newcommand{\C}{\mathbb C}
\newcommand{\R}{\mathbb R}
\newcommand{\E}{\mathbb E}
\newcommand{\Z}{\mathbb Z}
\newcommand{\Q}{\mathbb Q}
\newtheorem*{theorem*}{Theorem}
\newtheorem{theorem}{Theorem}[section]
\newtheorem{theoremi}{Theorem}
\newtheorem{proposition}[theorem]{Proposition}
\newtheorem{lemma}[theorem]{Lemma}
\newtheorem{corollary}[theorem]{Corollary}
\newtheorem*{corollary*}{Corollary}
\theoremstyle{definition}
\newtheorem{example}[theorem]{Example}
\newtheorem*{example*}{Example}
\newtheorem{definition}[theorem]{Definition}
\newtheorem*{definition*}{Definition}
\newtheorem{remark}[theorem]{Remark}
\newtheorem*{remark*}{Remark}
\begin{document}
\title{Orbifold splice quotients and  log covers of surface pairs}
\author{Walter D. Neumann} \thanks{Neumann's research supported under NSF grant
  no.\ DMS-1608600} \address{Department of
  Mathematics\\Barnard College, Columbia University\\New York, NY
  10027} \email{neumann@math.columbia.edu} \author{Jonathan Wahl}
\address{Department of Mathematics\\The University of North
  Carolina\\Chapel Hill, NC 27599-3250} \email{jmwahl@email.unc.edu}
\keywords{surface singularity, splice quotient singularity, orbifold homology,
  rational homology sphere, singular pair, abelian cover} \subjclass[2000]{32S50, 14J17, 57M25,
  57N10}
  \begin{abstract} A three-dimensional orbifold $(\Sigma, \gamma_i, n_i)$, where $\Sigma$ is a rational homology sphere,  has a universal abelian orbifold covering, whose covering group is the first orbifold homology.  A singular pair $(X,C)$, where $X$ is a normal surface singularity with $\Q$HS link and $C$ is a Weil divisor, gives rise on its boundary to an orbifold.  One studies the preceding orbifold notions in the algebro-geometric setting, proving the existence of the universal abelian log cover of a pair.  A key theorem computes the orbifold homology from an appropriate resolution of the pair.  In analogy with the case where $C$ is empty and one considers the universal abelian cover, under certain conditions on a resolution graph one can construct pairs and their universal abelian log covers.  Such pairs are called orbifold splice quotients.  
  \end{abstract}
\maketitle

Let $(X,0)$ be the germ of a normal complex surface singularity whose link $\Sigma$ is a rational homology sphere  ($\mathbb
Q$HS).  The topology of $\Sigma$ is determined from any good resolution $(\tilde
X,E)\rightarrow (X,0)$ by its weighted dual resolution graph
$\Gamma$, which is a tree.  
The discriminant group $D(\Gamma)$, the cokernel of the intersection matrix $(E_i\cdot E_j)$, is isomorphic to the first homology group of $\Sigma$.  The universal abelian cover (UAC) $\Sigma'\rightarrow \Sigma$ extends to a ``cover'' $(X',0)\rightarrow (X,0)$ of singularities, also called the UAC.  These covers are quotients by an action of $D(\Gamma)$.

It was shown in \cite{nw1} that under some mild conditions on a graph $\Gamma$ with $t$ ends (``semigroup and congruence  conditions''), one can construct 
\begin{enumerate}
\item  explicit classes of complete intersection singularities $(X',0)\subset (\C^t,0)$
\item a diagonal representation $D(\Gamma)\subset (\C^*)^t$ acting freely on each $X'-\{0\}$
\item $(X',0)\rightarrow (X',0)/D(\Gamma)\equiv (X,0)$, the UAC of a singularity with graph $\Gamma$. 
\end{enumerate}
Such $X$, called ``splice quotient singularities,'' are thus explicit examples having a resolution with the given graph $\Gamma$.  (See (6.1) below for a precise statement).  For their role in singularity theory, see e.g. \cite{no} and \cite{no2}.  

The ``end-curve theorem'' of \cite{nw3} characterizes those singularities $(X,0)$ which are splice quotients.  The $t$ ends of the graph $\Gamma$ of the minimal good resolution correspond to $t$ isotopy classes of knots in the link $\Sigma$.  Splice quotient singularities are exactly those for which each such class is represented (up to a  multiple) by the zero-set of a function on $(X,0)$.  In this way, one recovers the result (\cite{okuma2}) that rational singularities, and those minimally elliptic singularities with $\Q$HS link, are splice quotients.


Rather than consider only covering spaces of a three-manifold $\Sigma$, it is natural to consider \emph{orbifold covers}.   Given a class of $r$ knots $\{\gamma_i\}$ in $\Sigma$, with multiplicities $n_i\geq 1$, consider ``covers'' which are allowed to branch over these knots, with ramification indices bounded by the $n_i$.   (Compare with looking at extensions of a number field allowed to ramify over some finite set of primes.)  In Section 1 this notion is discussed and related to usual orbifold language, including
the \emph{universal abelian orbifold cover} (UAOC) $\Sigma''\rightarrow \Sigma$ and its covering group, the \emph{(first) orbifold homology group} $H_1^{orb}(\Sigma)$.  When $\Sigma$ is a $\Q$HS,  Proposition 1.3 indicates how to compute $H_1^{orb}(\Sigma)$, and proves it is an extension of the usual first homology group of $\Sigma$ by a product of cyclic groups of order $n_i$.  Abelian orbifold covers are classified by quotients of $H_1^{orb}(\Sigma)$.

There is an analogous notion of cover for a singular pair $(X,C)$.  Here $(X,0)$ is again a normal surface singularity with link $\Sigma$, and  $C=\Sigma_{i=1}^r n_iC_i$ is a sum of irreducible Weil divisors $\{C_i\}$.  By intersecting with a neighborhood boundary, a pair $(X,C)$ gives rise to $\Sigma$ and the orbifold data of knots $\gamma_i$ and weights $n_i$.  A \emph{log cover} of the pair is a finite covering of $X$ branched only over the $C_i$, with ramification bounded by $n_i$.  Basic results about branched cyclic covers are given in Section 2.

For $(X,C)$ with $\Q$HS link, Theorem 3.1 proves the existence of the \emph{universal abelian log cover}, or \emph{UALC}, a log cover of pairs $(X'',C'')\rightarrow (X,C)$ which corresponds to the UAOC on the boundary.   Note that the $C_i$ need not 
be $\Q$-Cartier.

From the graph of any good resolution of $(X,0)$,  one can compute the covering group $D(\Gamma)$ of the UAC $(X',0)\rightarrow (X,0)$.  To find the 
covering group of the UALC of a pair $(X,C)$, consider the smallest good resolution $(\tilde{X},E)\rightarrow (X,0)$ for which the proper transform $\tilde{C}_i$ of each $C_i$ intersects transversally a leaf (or end) $E_i$ of $E$, and each leaf intersects at most one $\tilde{C}_i$.  Such leaves are called \emph{special}, and $\tilde{X}\equiv\tilde{X}_C$ is the \emph{minimal orbifold resolution} of $(X,C)$.  
Denote the corresponding graph $\Gamma_C$; it is quasi-minimal (see (6.1) below).  Now let $\Gamma_C^*$ be the same graph, but decorated by the placement of an arrow and a weight $n_i$ at each special leaf $E_i$.  Finally,  let $D(\Gamma_C^*)$ be the cokernel of the matrix $(E_i\cdot E_j)$ modified by multiplying each row corresponding to a special leaf $E_i$ by $n_i$ .  One can easily prove there is a short exact sequence 
   $$0\rightarrow \oplus \mathbb Z/(n_i) \rightarrow D(\Gamma_C^*)\rightarrow D(\Gamma_C)\rightarrow 0.$$

\begin{example*} If $X=\mathbb C^2$ and $C=\{x^py^q(x-y)^r(y^2-x^3)^s=0\}$, the diagram for $\Gamma_C^*$ is 
$$
\xymatrix@R=8pt@C=30pt@M=0pt@W=0pt@H=0pt{
&&&&{s}&{q}\\
&&&&&&&&&\\
&&&&\lineto[d]\ar[u]&\lineto[d]\ar[u]&\\
&&&&\lineto[d]&\lineto[d]&\\
&&&&\righttag{\bullet}{-1}{6pt}&\righttag{\bullet}{-2}{6pt}&&&\\
&&&&\lineto[u]&\lineto[u]&\\
&&\undertag{~}{p}{12pt}&\ar[l]\undertag{\bullet}{-1}{4pt}\lineto[r]&\undertag{\bullet}{-5}{4pt}\lineto[r]\lineto[u]&\undertag{\bullet}{-2}{4pt}\lineto[r]\lineto[u]&\undertag{\bullet}{-1}{4pt}\lineto[r]&\undertag{\ar}{r}{12pt}\\
&&\\
&&\\
&&\\
}
$$ 
One finds $\Gamma_C$ by removing the arrows and $p,q,r,s$.  $D(\Gamma_C^*)$ is the direct sum of $4$ cyclic groups, of orders $p,q,r,s$ respectively.
\end{example*}
The first important result is proved in $(4.3)$:
\begin{theoremi} $D(\Gamma_C^*)$, the cokernel of the modified intersection matrix above, is isomorphic to $H_1^{orb}(\Sigma)$, and is the covering group of the UALC of $(X,C)$.
\end{theoremi}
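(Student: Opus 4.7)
The plan is to compute $H_1^{orb}(\Sigma)$ directly from the plumbing description of $\Sigma = \partial \tilde X_C$ and compare the resulting presentation with the one defining $D(\Gamma_C^*)$; the covering-group statement will then follow from the universal property of the UAOC together with the existence theorem for the UALC (Theorem 3.1).

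Realize each orbifold knot $\gamma_i$ as the boundary of a small disk on the proper transform $\tilde C_i$, which meets the special leaf $E_i$ transversally at one point. By the description of orbifold homology in Section 1, $H_1^{orb}(\Sigma)$ is the quotient of $H_1(\Sigma \setminus \cup_i \gamma_i)$ by the relations $n_i \mu_i = 0$, where $\mu_i$ is a meridian of $\gamma_i$. A Mayer--Vietoris (plumbing) calculation presents $H_1(\Sigma \setminus \cup_i \gamma_i)$ by generators $\{m_j\}$ (meridians of the exceptional curves $E_j$) and $\{\mu_i\}$, with relations obtained by pushing each $E_j$ into $\partial \tilde X_C$. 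For non-special $E_j$ this gives the usual relation $\sum_k (E_j \cdot E_k)\, m_k = 0$; for a special leaf $E_i$, the parallel pushoff acquires one extra transverse crossing with $\tilde C_i$, producing the modified relation $\sum_k (E_i \cdot E_k)\, m_k + \mu_i = 0$. Substituting $\mu_i = -\sum_k (E_i \cdot E_k)\, m_k$ into $n_i \mu_i = 0$ replaces the $i$-th special row by $n_i$ times itself, and the resulting presentation on the $\{m_j\}$ alone is precisely the cokernel of the modified intersection matrix, i.e.\ $D(\Gamma_C^*)$. By construction this isomorphism is compatible with the short exact sequence in the text: the image of $\mu_i$ generates a $\Z/(n_i)$-summand of the kernel of $D(\Gamma_C^*) \to D(\Gamma_C)$, matching the extension of Proposition 1.3.

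The covering-group statement is then immediate: the UAOC on $\Sigma$ has deck group $H_1^{orb}(\Sigma)$ by definition, and Theorem 3.1 produces a log cover of $(X,C)$ whose boundary restriction is the UAOC; this log cover is the UALC, and its deck group is $H_1^{orb}(\Sigma) \cong D(\Gamma_C^*)$. The main technical obstacle is justifying the modified relation for special leaves, namely the geometric fact that a parallel pushoff of $E_i$ in the boundary acquires exactly one meridional linking with $\tilde C_i$. This reduces to the local model at the transverse intersection $\tilde C_i \cap E_i$, namely $(\C^2, \{z_1=0\} \cup \{z_2=0\})$, where the extra linking is visible by direct inspection; some care is needed with signs and with the choice of framing used to identify meridians across the plumbing graph.
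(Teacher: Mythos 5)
Your argument is correct, but its key step goes by a genuinely different route than the paper's designated proof. The paper reduces the theorem (via Lemma 1.5) to Proposition 4.4, the relation $[\mu_i]+\sum_j(E_i\cdot E_j)[K_j]=0$ in $H_1(\Sigma-\bigcup_i\gamma_i)$, and proves that proposition, following Pedersen, indirectly: Lemma 1.4 forces the relations to have the shape $\sum_j(E_i\cdot E_j)[K_j]=\sum_j n_{ij}[\mu_j]$ for an unknown integer matrix, Lemma 4.5 produces $2$-chains bounding $dK_i-\sum_j d\,\ell(K_i,\gamma_j)\mu_j$, and left-multiplying by $(E_i\cdot E_j)$ (minus the inverse of the linking matrix, by Proposition 4.2) together with the $\Z$-independence of the $[\mu_i]$ pins down $n_{ij}=-\delta_{ij}$. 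You instead read off the relation directly by pushing each $E_j$ into the boundary and counting the single transverse crossing of the pushoff of a special leaf with $\gamma_i$; this is essentially the alternate argument the paper attributes to Looijenga and only outlines in Remark 4.6. Your route is more direct and makes the appearance of exactly one $\mu_i$ per special row geometrically transparent; the paper's route trades the local orientation analysis at the crossing for the already-established global formula $\ell(K_i,K_j)=-e_i\cdot e_j$. Two points in your sketch need to be firmed up. First, ``parallel pushoff'' of $E_i$ is not literally a section of the normal circle bundle unless $E_i\cdot E_i=0$; as in Remark 4.6 one should use a meromorphic section with poles so that the diagonal term $(E_i\cdot E_i)m_i$ appears in the relation. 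Second, and more importantly, you must justify that these relations form a \emph{complete} presentation of $H_1(\Sigma-\bigcup_i\gamma_i)$, not merely that they hold; the paper secures this through the exact sequence of Lemma 1.4 (equivalently one can use the long exact sequence of the pair $(\tilde X_C,\tilde X_C\setminus(E\cup\bigcup_i\tilde C_i))$ together with $H_1(\tilde X_C)=0$), and your appeal to ``a Mayer--Vietoris calculation'' should be spelled out to that effect. Your final reduction (eliminating $\mu_i$ and multiplying the special rows by $n_i$) and the appeal to Theorem 3.1 for the covering-group statement agree with the paper, and as you note the sign of $\mu_i$ in the special relation is immaterial for the quotient.
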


According to \cite{nw1}, a graph $\Gamma$ with $t$ ends gives a faithful diagonal representation
 $D(\Gamma)\hookrightarrow (\C^*)^t$.  In Proposition 5.3, we show there is a representation $D(\Gamma^*)\hookrightarrow (\C^*)^t$ compatible with that of $D(\Gamma)$ via a map $N:(\C^*)^t\rightarrow (\C^*)^t$ which raises coordinates to the power given by the weight at the corresponding special end of $\Gamma^*$, and leaves other entries fixed.
 
Our purpose is to define what it means for a pair  $(X,C)$  to be an \emph{orbifold splice quotient}.   The graphs $\Gamma_C$ and $\Gamma_C^*$ depends only on the boundary topology of the corresponding orbifold.  In $(6.5)$ we have
  
\begin{theoremi}  Let $\Gamma$ be a quasi-minimal graph with $t$ ends and $\Gamma^*$ a decorated version, assigning a weight to the special ends.  Let $D(\Gamma^*)\hookrightarrow (\C^*)^t$ be the corresponding representation. If $\Gamma$ satsifies the semigroup and congruence conditions, then  

\begin{enumerate}  \item there are isolated complete intersection singularities $(X'',0)\subset (\C^t,0)$ 
\item on each $(X'',0)$, there is a Cartier divisor $C''$ so that $D(\Gamma^*)$ acts on the pair $(X'',C'')$, freely off the support of $C''$
\item each quotient $(X'',C'')\rightarrow(X'',C'')/D(\Gamma^*)\equiv (X,C)$ is the UALC
\item the orbifold data associated to $(X,C)$ is $\Gamma, \Gamma^*$.
\end{enumerate}
\end{theoremi}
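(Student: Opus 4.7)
The plan is to leverage the splice quotient construction of \cite{nw1} for the graph $\Gamma$ and adapt it to carry the orbifold weights in $\Gamma^*$. Since $\Gamma$ satisfies the semigroup and congruence conditions, \cite{nw1} supplies an explicit isolated complete intersection $(X',0) \subset (\C^t,0)$ together with a faithful diagonal action $D(\Gamma) \hookrightarrow (\C^*)^t$ making $X'/D(\Gamma)$ a splice quotient with graph $\Gamma$. Order coordinates $z_1,\ldots,z_t$ so that the special ends of $\Gamma^*$ correspond to indices $1,\ldots,r$ with weights $n_1,\ldots,n_r$; then the map of Proposition 5.3 is $N(z_1,\ldots,z_t)=(z_1^{n_1},\ldots,z_r^{n_r},z_{r+1},\ldots,z_t)$.

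The first step is to construct $(X'',0)$ by pulling back the splice equations under $N$, i.e., by substituting $z_i^{n_i}$ for $z_i$ (for $i\leq r$) in the defining equations of $X'$. The restricted morphism $N\colon X''\to X'$ is then a finite abelian cover with Galois group $\bigoplus_{i=1}^r \Z/n_i$, branched precisely over the coordinate slices $\{z_i=0\}\cap X'$ for special $i$. To see that $X''$ is itself a complete intersection with an isolated singularity, one checks that the substituted equations still form a regular sequence away from the origin: any singular point of $X''\setminus\{0\}$ must lie over the branch locus, and there an explicit inspection along the relevant coordinate stratum, using the form of the splice equations from \cite{nw1}, shows the ambient germ is smooth. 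Declare $C''=\sum_{i=1}^r n_i\,(\{z_i=0\}\cap X'')$, a Cartier divisor cut out by coordinate functions.

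Next, the representation $D(\Gamma^*)\hookrightarrow (\C^*)^t$ of Proposition 5.3 preserves $X''$ by its defining compatibility with $D(\Gamma)$ via $N$: if $f$ is a defining equation of $X'$ on which $D(\Gamma)$ acts by a character, then $f\circ N$ is acted on by $D(\Gamma^*)$ through the pulled-back character, so its zero set is invariant. The kernel of $D(\Gamma^*)\twoheadrightarrow D(\Gamma)$ is $\bigoplus \Z/(n_i)$ by the short exact sequence of the introduction, and it acts on $X''$ by $n_i$-th roots of unity in the special coordinates while fixing the rest; its fixed locus is precisely the support of $C''$. Hence $D(\Gamma^*)$ acts freely off $C''$, establishing (1)--(2). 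Quotienting $X''$ first by the kernel recovers $X'$ and then by $D(\Gamma)$ recovers $X$, so $(X'',C'')/D(\Gamma^*)=(X,C)$ with $C$ the image of $C''$, which gives the underlying map of pairs in (3).

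For (4) and the UALC identification in (3), I would invoke the geometry of the resulting resolution. The splice quotient machinery of \cite{nw1} equips $X$ with a distinguished good resolution whose dual graph is $\Gamma$, and the end-curve analysis of \cite{nw3} identifies the image of $\{z_i=0\}\cap X''$ on $X$ as an end-curve at the $i$-th leaf; for special $i$ this is a component of $C$, meeting that leaf transversally with the multiplicity $n_i$ dictated by the substitution. Therefore this resolution is the minimal orbifold resolution of $(X,C)$ and its decoration is exactly $\Gamma^*$, proving (4). Theorem 1 above (from (4.3)) then identifies $D(\Gamma^*)\cong H_1^{\mathrm{orb}}(\Sigma)$; since $(X'',C'')\to(X,C)$ is a connected abelian log cover whose Galois degree equals this maximal value, it must be the UALC. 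The main obstacle is the second step---verifying that the power-substituted splice equations still cut out a complete intersection with isolated singularity, which requires tracing the regular-sequence property through the explicit form of the splice equations along the coordinate strata where the branching occurs; everything else is largely bookkeeping around results already established earlier in the paper.
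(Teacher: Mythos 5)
Your overall strategy --- pull back the splice equations along the power map $N$, identify $D(\Gamma^*)$ with $N^{-1}(D(\Gamma))$ via Proposition 5.3, and pass to the quotient --- is exactly the paper's. But the one genuinely nontrivial claim in part (1), that $\{f_k(z_1^{n_1},\dots,z_t^{n_t})=0\}$ has an isolated singularity, is asserted rather than proved: you say an ``explicit inspection'' of the splice equations along the branch strata will show smoothness, and then concede at the end that this is the main obstacle. That route is harder than it needs to be and not clearly workable, since the $f_k$ are only specified up to choices and higher-order terms, and along $\{z_i=0\}$ the partial derivatives in the special coordinates acquire factors $n_i z_i^{n_i-1}$ that vanish there, so a naive Jacobian computation degenerates exactly where you need it. The paper avoids the equations entirely: it factors $N$ into power maps raising one coordinate at a time; away from the origin each such map is the cyclic cover of the smooth surface $X'-\{0\}$ obtained by extracting an $n_i$-th root of the coordinate function $x_i$, and since $\{x_i=0\}\cap X'$ is a \emph{reduced} curve, smooth away from $0$ (this is part of Theorem 6.1 and the discussion following it), each such root extraction is smooth away from $0$, while the remaining power maps are unramified over the new curve. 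Isolatedness of the singularity of $X''$ thus follows from a general fact about cyclic covers branched along reduced divisors, with no inspection of the $f_k$.

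Two smaller points. First, the divisor $C''$ should be \emph{reduced} (coefficient $1$ on each $\{z_i=0\}\cap X''$), not weighted by $n_i$ as you wrote: in a log cover the upstairs coefficients are $n_i/m_{ij}$, and here the ramification is total, so $m_{ij}=n_i$. This does not affect the support, so your freeness argument for $D(\Gamma^*)$ off $|C''|$ --- which correctly combines freeness of $\ker\Phi$ off the coordinate slices with freeness of $D(\Gamma)$ on $X'-\{0\}$ --- still stands. Second, your identification of the quotient with the UALC by a degree count against $|H_1^{orb}(\Sigma)|=|D(\Gamma^*)|$ (via Theorem 4.3) is a legitimate and arguably cleaner alternative to the paper's appeal to the explicit Section 3 construction; it needs only connectedness of the germ $(X'',0)$, which follows from normality once the isolated complete intersection claim is actually established.
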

\begin{definition*}  A pair $(X,C)$ in the above Theorem is called an orbifold splice quotient.
\end{definition*}

We outline the construction and proof.  
 Assign a variable $x_i$ to every end of $\Gamma$.   By the semigroup and congruence hypotheses and the major Theorem 7.2 of \cite{nw1}, one may consider a set of $D(\Gamma)$-invariant splice-equations $\{f_j(x_i)=0\}$, giving an isolated complete intersection singularity $(X',0)\subset (\C^{t},0)$, which is the UAC of the quotient $X\equiv X'/D(\Gamma).$  
The Cartier divisor $\{x_i=0\}$ on $X'$ is a reduced curve; some power of $x_i$ is $D(\Gamma)$-invariant, and $x_i^{det\  \Gamma}$ provides a Cartier divisor on $X$ whose reduction is an irreducible $\mathbb Q$-Cartier divisor.  In particular, for each special end of $\Gamma$ and corresponding $x_i$, define the $\mathbb Q$-Cartier divisor $C_i$ as the reduced image of $\{x_i=0\}$ on $(X,0)$.  
       
        Next, replace a variable $x_i$ corresponding to a special leaf by $z_i^{n_i}$.   Consider the new complete intersection singularity $(X'',0)\subset (\C^t,0)$ defined by $\{f_j(z_i^{n_i})=0\}$.  This is the inverse image of $(X',0)$ under the aforementioned power map $N:\C^t\rightarrow \C^t$ which raises to the $n_i^{th}$ power in the $i^{th}$ special entry, and is the identity on others.   We prove that $(X'',0)$ is an isolated complete intersection singularity.  We also show  (5.3)  $D(\Gamma^*)$ is the inverse image under $N$ of $D(\Gamma)$, hence acts on $(X'',0)$.    Letting $C''$ be the sum of the divisors $z_i=0$ for the special leaves, the result will follow.

 There is an analogue of the End-Curve Theorem of \cite{nw3} for orbifold splice quotients.  For a given pair $(X,C)$, consider the minimal orbifold resolution $(\tilde{X}_C,E)\rightarrow (X,0)$ and the corresponding graph $\Gamma_C$, with the special ends noted.  In $(6.8)$ we prove:
 
 \begin{theoremi} (Orbifold End-Curve Theorem) A pair $(X,\Sigma n_iC_i)$ is an orbifold splice quotient if and only each $C_i$ is $\Q$-Cartier, and for every non-special end of the graph $\Gamma_C$, there is a function on $(X,0)$ whose zero-set on $\tilde{X}_C$ cuts out a smooth curve transversal to the end. 
 \end{theoremi}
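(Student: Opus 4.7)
The plan is to deduce both directions from the classical End-Curve Theorem of \cite{nw3}, applied to the UALC $(X'',0)$ provided by Theorem 3.1. The forward direction is routine. For the reverse direction I will use the end-curve functions on $(X,0)$ together with the $\Q$-Cartier hypothesis on each $C_i$ to manufacture end-curve functions at every end of the resolution graph $\Gamma''$ of $X''$, then apply the ECT to $X''$ and descend.

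\textbf{Necessity.} Assume $(X,C)$ is an orbifold splice quotient. Then $(X'',0)\subset(\C^t,0)$ is the complete intersection of (6.5)(1), and $D(\Gamma^*)\subset(\C^*)^t$ acts diagonally. Each coordinate $x_j$ at a non-special end is a $D(\Gamma^*)$-semi-invariant, so some power is invariant and descends to a function on $(X,0)$ whose zero set on $\tilde{X}_C$ is supported on the exceptional set plus a smooth curve transversal to the $j$th end. Similarly, for each special end, some power of $z_i$ descends to a function on $X$ whose reduced divisor is $C_i$, so each $C_i$ is $\Q$-Cartier.

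\textbf{Sufficiency.} Let $(X'',C'')\to(X,C)$ be the UALC and let $\Gamma''$ be the resolution graph of $X''$. Using (4.3) to identify $D(\Gamma_C^*)\cong H_1^{orb}(\Sigma)$ and the covering theory of Section 1, the ends of $\Gamma''$ are described explicitly as lifts of ends of $\Gamma_C$. At each non-special end, pulling back the hypothesized end-curve function yields end-curve functions at each of the lifted ends of $\Gamma''$. At each special end, the $\Q$-Cartier hypothesis yields $u_i\in\mathcal{O}_{X,0}$ whose divisor is an integer multiple of $C_i$; pulling back to $X''$ and using the ramification index $n_i$ of the cover along $C_i$, we extract a root $z_i\in\mathcal{O}_{X'',0}$ whose zero divisor gives an end-curve at each lift of the special end. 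Thus $X''$ has end-curve functions at \emph{every} end of $\Gamma''$, so by the End-Curve Theorem of \cite{nw3}, $(X'',0)$ is a splice quotient of $\Gamma''$: these functions embed $X''\subset(\C^{t''},0)$ as a splice-type complete intersection and $\Gamma''$ automatically satisfies the semigroup and congruence conditions. Descending by $D(\Gamma^*)$ and matching the inherited representation with the one of Proposition 5.3 then exhibits $(X,C)$ as an orbifold splice quotient with orbifold data $\Gamma_C,\Gamma_C^*$.

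\textbf{Main obstacle.} The delicate step is producing the end-curve function at a special end from the $\Q$-Cartier property of $C_i$: one must extract the correct root of $\pi^*u_i$ on $X''$ and verify that the resulting divisor cuts out a \emph{smooth} curve transversal to the corresponding end of $\Gamma''$, which requires careful bookkeeping on a common resolution of $\tilde{X}_C$ and the minimal resolution of $X''$, using that the ramification of $X''\to X$ along $C_i$ is exactly $n_i$. A second obstacle is showing that the splice-quotient structure on $X''$ produced by the classical ECT coincides with its structure as the inverse image under $N:(\C^t,0)\to(\C^t,0)$ of an ordinary splice quotient $X'$ for $\Gamma_C$; in particular one must verify that the $D(\Gamma^*)$-action on $X''$ obtained by the covering construction agrees with the one coming from Proposition 5.3 via the characters $\chi(z_i),\chi(x_j)$. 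A subsidiary issue is verifying that the semigroup and congruence conditions on $\Gamma''$ required by \cite{nw3} are automatic from the orbifold data, with no extra hypothesis beyond those of the theorem.
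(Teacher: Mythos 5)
There is a genuine gap in your sufficiency argument, and it stems from routing the proof through the UALC $X''$ rather than through $(X,0)$ itself. The paper's proof is much more direct: since each $C_i$ is $\Q$-Cartier, some multiple $mC_i$ is the divisor of a function $g_i$ on $X$; on the minimal orbifold resolution $\tilde{X}_C$ this $g_i$ vanishes only on $E$ and on $m\tilde{C_i}$, and $\tilde{C_i}$ is by construction smooth and transversal to the special leaf $E_i$ --- so $g_i$ is already an end-curve function for the \emph{special} end. Combined with the hypothesized end-curve functions at the non-special ends, every end of $\Gamma_C$ has an end-curve function, and the classical End-Curve Theorem applied to $(X,0)$ with the resolution $\tilde{X}_C$ shows $\Gamma_C$ satisfies the semigroup and congruence conditions and $(X,0)$ is a splice quotient for $\Gamma_C$, with the $C_i$ appearing as the images of the coordinate curves at the special ends. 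Theorem 6.5 then produces the UALC as $N^{-1}(X')$ and exhibits $(X,C)$ as an orbifold splice quotient. The observation you are missing is precisely that the $\Q$-Cartier hypothesis supplies the end-curve functions at the special ends of $\Gamma_C$ downstairs, so no analysis of $X''$ or its graph is needed.

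By contrast, the two ``obstacles'' you flag in your own plan are real and are not resolved by anything you write. First, the claim that the ends of $\Gamma''$ are exactly the lifts of ends of $\Gamma_C$, and that the root of $\pi^*u_i$ cuts out a smooth transversal curve at each such lift, is asserted rather than proved; branched abelian covers of plumbing graphs can insert chains and sprout new strings over the branch leaves, so this identification requires an argument. Second, and more seriously, Definition 6.6 requires $X''$ to arise as $N^{-1}(X')$ for a splice-type complete intersection $X'$ attached to $\Gamma_C$, carrying the specific diagonal $D(\Gamma^*)$-action of Proposition 5.3. Knowing only that $X''$ is \emph{some} splice quotient for \emph{its own} graph $\Gamma''$ (via the ECT applied upstairs) does not yield equations of the form $f_k(z_1^{n_1},\dots,z_t^{n_t})=0$ with $f_k$ splice equations for $\Gamma_C$; closing that gap essentially forces you to prove that $(X,0)$ is a splice quotient for $\Gamma_C$ anyway, which is the paper's one-step argument. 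As it stands, the sufficiency direction of your proposal does not go through.
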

 
 It follows from the definition that an orbifold splice quotient  is a splice quotient for which each $C_i$ is $\Q$-Cartier.  We do not know whether the converse is true; we suspect not. 
 However, in $(6.9)$ and $(6.10)$ we have the following:
 
 \begin{corollary*} If $(X,0)$ has a rational singularity, then every pair $(X,C)$ is an orbifold splice quotient.
 \end{corollary*}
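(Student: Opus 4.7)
The plan is to apply the Orbifold End-Curve Theorem stated above: it suffices to show (a) that each Weil divisor $C_i$ is $\Q$-Cartier, and (b) that for every non-special end $v$ of $\Gamma_C$, there is a function $f\in\mathcal O_{X,0}$ whose zero-set on $\tilde X_C$ cuts out a smooth curve transversal to $v$. Assertion (a) is classical: for a rational surface singularity $(X,0)$ with $\Q$HS link, the local divisor class group $\operatorname{Cl}(X,0)$ is finite (isomorphic to $H_1(\Sigma;\Z)$), so every Weil divisor on $X$ is automatically $\Q$-Cartier.

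For assertion (b), I would first invoke Okuma's theorem recalled in the introduction: a rational singularity with $\Q$HS link is a splice quotient. The End-Curve Theorem of \cite{nw3} then provides, for every end $v$ of the minimal good resolution graph $\Gamma$ of $X$, an end-curve function $f_v\in\mathcal O_{X,0}$ whose pullback to the minimal good resolution $\tilde X$ has divisor $Z_v+\sum a_iE_i$, where $Z_v$ is an irreducible smooth curve meeting the exceptional leaf $E_v$ transversally at a single smooth point and meeting no other exceptional component.

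The heart of the argument is then to identify the non-special ends of $\Gamma_C$ with ends of $\Gamma$ disjoint from the $\tilde C_i$. By minimality of $\tilde X_C$, every blow-up in the sequence $\tilde X_C\to\tilde X$ is centered at a point on (the strict transform of) some $C_i$, performed to separate the $\tilde C_i$ from each other, to resolve a tangency with an exceptional curve, or to move a $\tilde C_i$ from an internal vertex of the graph to a leaf. A short case analysis of these possibilities shows that each blow-up either introduces an internal vertex or creates a leaf carrying one of the $\tilde C_i$---hence a special leaf; no new non-special leaves are produced. Consequently every non-special end $v$ of $\Gamma_C$ is already an end of $\Gamma$ whose exceptional curve $E_v$ meets no $\tilde C_i$ in $\tilde X$, and in particular no blow-up center lies on $E_v$. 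Since the end-curve $Z_v$ on $\tilde X$ meets the exceptional set only along $E_v$, both $E_v$ and $Z_v$ lift isomorphically to $\tilde X_C$; hence $f_v$ continues to serve as an end-curve function for $v$ as an end of $\Gamma_C$. The main obstacle is the combinatorial case analysis of these minimal blow-ups; once that is in hand, assertions (a) and (b) of the Orbifold End-Curve Theorem are both verified and the corollary follows.
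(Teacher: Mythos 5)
Your step (a) matches the paper, but step (b) contains a genuine gap: the combinatorial claim that the blow-ups from $\tilde X$ to $\tilde X_C$ ``produce no new non-special leaves'' is false. The $D_4$ example in Section 7 is a counterexample: resolving the tangency of $C_2$ with the central curve requires three blow-ups and creates the leaf labelled $x_4$, which carries no $\tilde C_i$ and is not an end of the minimal good resolution graph of $D_4$. More generally, whenever a $\tilde C_i$ meets the exceptional set at a node, tangentially, or at the same point as another $\tilde C_j$, the separating blow-ups typically introduce fresh leaves that remain non-special. Example 6.11 of the paper shows this is not a removable technicality: for a non-rational splice quotient, such newly created ends can fail to admit end-curve functions, so end-curve functions on the minimal good resolution genuinely do not suffice and cannot simply be ``lifted.''

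The paper's proof sidesteps this entirely by using rationality at the level of the minimal orbifold resolution itself, exactly as in the proof of Okuma's theorem recalled in Theorem 6.3: for \emph{any} end $E_j$ of $\tilde X_C$ (old or new), choose any smooth curve $\tilde C_j$ transversal to $E_j$; its image $C_j$ in $X$ is $\Q$-Cartier because $\operatorname{Cl}(X,0)\cong \E^*/\E$ is finite when $p_g=0$, so some multiple of $C_j$ is the zero set of a function $h_j$, and $h_j$ is an end-curve function for $E_j$. With that substitution your argument closes: (a) and (b) of the Orbifold End-Curve Theorem hold and the corollary follows. As written, however, the ``heart of the argument'' you propose rests on a false lemma and would need to be replaced, not merely filled in.
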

 
 \begin{corollary*}  Let $(X,C)$ be a pair for which every non-special end of $\Gamma_C$ comes from an end of the minimal good resolution.  Then $(X,C)$ is an orbifold splice quotient if and only if $(X,0)$ is a splice quotient and every $C_i$ is $\Q$-Cartier.
 \end{corollary*}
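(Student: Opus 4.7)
The forward direction is essentially immediate from the remark preceding this corollary: any orbifold splice quotient is in particular a splice quotient in which each $C_i$ is $\Q$-Cartier. The work is in the converse, which I plan to establish by verifying the hypotheses of the Orbifold End-Curve Theorem $(6.8)$: every $C_i$ is $\Q$-Cartier (given), and for each non-special end $E_j$ of $\Gamma_C$ there is a function on $(X,0)$ whose zero-set on $\tilde X_C$ cuts out a smooth curve transversal to $E_j$.

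For the end-curve condition I combine the structural hypothesis with the splice-quotient assumption on $(X,0)$. By hypothesis, each non-special end $E_j$ of $\Gamma_C$ is already an end of the minimal good resolution graph $\Gamma$ of $(X,0)$. Since $(X,0)$ is a splice quotient, the End-Curve Theorem of \cite{nw3} furnishes, for each end of $\Gamma$, a function on $(X,0)$ whose strict transform on the minimal good resolution $\tilde X$ is a smooth curve meeting that end transversally at a smooth point. Apply this to each $E_j$ to obtain a function $f_j$ on $(X,0)$.

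It remains to propagate $f_j$ from $\tilde X$ to $\tilde X_C$. The map $\tilde X_C \to \tilde X$ factors into a sequence of point blowups whose sole purpose is to bring the proper transforms $\tilde C_i$ into transverse position with unique leaves of the exceptional divisor; by the minimality in the definition of $\tilde X_C$, every such blowup is forced by some $\tilde C_i$. A non-special end of $\Gamma_C$ is by definition disjoint from all $\tilde C_i$, and if it is also an end of $\Gamma$ then no blowup can lie on it: otherwise the end would either cease to be a leaf or would be replaced by a longer chain, contradicting that it is still an end in $\Gamma$. Hence both $E_j$ and the strict transform of $\{f_j=0\}$ survive untouched in $\tilde X_C$, and the transversal intersection persists. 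This verifies the hypotheses of $(6.8)$ and yields the converse.

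The main delicacy is this last localization step: showing that the blowups distinguishing $\tilde X_C$ from $\tilde X$ are confined to neighborhoods of the $\tilde C_i$ and in particular leave common non-special ends untouched. This is forced by the minimality built into the definition of $\tilde X_C$; once it is in hand, the rest of the argument is a direct appeal to the two End-Curve Theorems.
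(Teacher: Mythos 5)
Your proposal is correct and takes essentially the same route as the paper: the paper's proof of (6.10) likewise observes that, since $(X,0)$ is a splice quotient, each end of the minimal good resolution has an end-curve function which ``works just as well'' when that end persists as a non-special end of $\Gamma_C$, and then invokes Theorem 6.8. Your final localization paragraph merely spells out that one-line step; the only slight overstatement is the claim that \emph{no} blowup can lie on such an end $E_j$ (a blowup at the intersection of $E_j$ with its neighbor is not excluded), but this is harmless since the end-curve meets $E_j$ at a smooth point of the exceptional divisor, which cannot be a blowup center without making $E_j$ special or destroying its leaf status.
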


 The paper is organized as follows: In Section 1 we recall the basics of orbifolds in dimension three, the universal orbifold covering and fundamental group, and orbifold homology.  Section 2 introduces singular pairs and their log covers, with focus on cyclic covers.  From now on, one assumes the link is a $\Q$HS.  Section 3 gives an algebro-geometric construction of the universal abelian log cover of a pair.  The orbifold homology is computed in Section 4 from a plumbing diagram; the crucial representation of the orbifold discriminant group occurs in Section 5.  The definition and construction of orbifold splice quotients appears in Section 6, with an example presented in Section 7.
 
 We are grateful to Eduard Looijenga and Helge M\o{}ller Pedersen for assistance with proving Proposition 4.4.
 \bigskip

\section{{Orbifolds}}

For the special purposes of this paper, an \emph{orbifold} will be a compact
oriented $3$--manifold $\Sigma$, plus a disjoint set of 
embedded circles (or ``knots'') $\gamma_1,\cdots,\gamma_r\subset
\Sigma$, to which are associated positive integers $n_1,\cdots,n_r$ (the \emph{orbifold weights}).  While this is not the standard definition of orbifold (e.g., [5]), it will be useful in considering branched covers with restricted ramification.  See Example $1.2$ below.

\begin{definition}
A (finite) \emph{orbifold covering} consists of another orbifold
$(\Sigma', \gamma '_j, n'_j)$ and a finite map $f:\Sigma'\rightarrow
\Sigma$, with the following properties:
\begin{enumerate}
\item the inverse image of a knot $\gamma_i$ consists of various
  $\gamma'_{i_j}$, and $n'_{i_j}$ dividing $n_i$
\item $f^{-1}(\cup \gamma_i)=\cup\gamma'_j$, and $f$ is a covering map
  off this set
\item locally around $\gamma'_{i_j}$, $f$ maps to a neighborhood of
  $\gamma_i$ as an orbifold quotient of order $n_i/n'_{i_j}$.
\end{enumerate}
\end{definition}
The last statement means that if the pairs $(\Sigma,\gamma_i)$ and
$(\Sigma',\gamma'_{i_j})$ are given locally as $(\R^3,
z-\text{axis})$, then the map $f$ is given by dividing out by the group of
rotations of order $n_i/n'_{i_j}$ around the $z$-axis.  Since $f$ is a
covering map of some degree $d$ off the $\gamma'_{i_j}$, it follows
that for each $i$, $$\sum_{j} n_i/n'_{i_j} =d.$$

More precisely, the orbifolds we are considering could be described as
``locally cyclic orientable $3$-orbifolds'' since their local
structure is always $D^3/(\Z/(n))$ for some $n$ ($D^3$ a three-disk). We
restrict to cyclic groups since all our $3$--manifolds are
boundaries of $4$-manifolds with complex structure. 
\begin{example}  Let $M'$ be a compact oriented 3-manifold on which a finite group $G$
acts smoothly and faithfully, preserving orientation, and with
the property that the isotropy group $G_x$ at any point $x$ is
cyclic.   We make $M'\rightarrow M\equiv M'/G$ an orbifold cover. The knots $\gamma_i\subset M$ are the images of the components of the set of
points on $M'$ with non-trivial isotropy, and the weight $n_i$ is the order of
this isotropy group. The $\gamma'_{i_j}$ are the inverse images of the $\gamma_i$, and the $n'_{i_j}=1$.
\end{example}

From now on we will often use the notation $\Sigma$ both for the orbifold
and its underlying space when what  is meant is clear from context.  Most of the definitions here and below are due to Thurston (e.g., Chapter 13 of his notes \cite{t}).

An orbifold has an \emph{orbifold fundamental group}
$\pi_1^{orb}(\Sigma)$, obtained by quotienting
$\pi_1(\Sigma -\bigcup_i\gamma_i)$ by the relations 
$\mu_i^{n_i}=1$, where $\mu_i$ is represented by the boundary of a
small transverse disc to $\gamma_i$. This can also be described as
$\pi_1^{orb}(\Sigma)=\pi_1(\Sigma')$, where $\Sigma'$ is the result of
replacing a $D^2\times S^1$ neighborhood of each $\gamma_i$ by
$K(\Z/(n_i), 1)\times S^1$, glued so that $\mu_i$ represents a
generator of $\pi_1(K(\Z/(n_i),1))$. (Here $K(\Z/(n_i),1)$ is an
Eilenberg-Maclane complex---a CW-complex with fundamental group
$\Z/(n_i)$ and contractible universal cover.)

The Galois correspondence between covering spaces and subgroups of the
fundamental group extends to orbifolds, using orbifold covers and
orbifold fundamental group (see e.g., \cite{nn}). In particular, any
orbifold has a \emph{universal abelian orbifold cover}, or
\emph{UAOC}, classified by the commutator subgroup of
$\pi_1^{orb}(\Sigma)$. Its covering transformation group is the
abelianization $\pi_1^{orb}(\Sigma)^{ab}$ of $\pi_1^{orb}(\Sigma)$;
this is also $H_1^{orb}(\Sigma;\Z):=H_1(\Sigma';\Z)$, the degree $1$
orbifold homology of $\Sigma$ (see \cite{nn},  \cite{ped}, and \cite{sc} for more on orbifold
homology).

Not every orbifold is the quotient of a manifold by a finite group, as in Example $2.1$; an
example is $(S^2\times S^1, \{p\}\times S^1, n)$ for $n>1$.  However,  if
$\Sigma$ is a $\Q$HS we prove:

\begin{proposition}\label{prop:UAOC} 
  Let $(\Sigma, \gamma_i, n_i)$ be an orbifold for which the
  underlying space $\Sigma$ is a $\Q$HS.  Then the \emph{UAOC}
  $\widetilde\Sigma^{ab}$ of $\Sigma$ is a finite cover of $\Sigma$,
  and its covering group $G=H_1^{orb}(\Sigma;\Z)$ sits in an exact
  sequence
$$0\rightarrow \Z/(n_1)\oplus \cdots \oplus 
\Z/(n_r)\rightarrow G \rightarrow
H_1(\Sigma;\Z)\rightarrow 0\,.$$ 
\end{proposition}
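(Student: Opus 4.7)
The plan is to compute $H_1^{orb}(\Sigma;\Z)$ directly from the presentation $\pi_1^{orb}(\Sigma) = \pi_1(\Sigma - \bigcup \gamma_i)/\langle \mu_i^{n_i}\rangle$ given in the paper, using a long exact sequence to extract the required extension.

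First I would set $N = \bigsqcup_i N_i$ to be a disjoint tubular neighborhood of the knots $\gamma_i$, with each $N_i \cong D^2\times S^1$, and let $U = \Sigma \setminus \mathrm{int}(N)$ be the exterior. Since abelianization is right exact,
\[ H_1^{orb}(\Sigma;\Z) \;\cong\; H_1(U;\Z)\,\big/\,\langle n_i[\mu_i]\rangle, \]
where $[\mu_i]\in H_1(U;\Z)$ is the meridian class of $\gamma_i$.

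Next I would analyze $H_1(U;\Z)$ via the long exact sequence of the pair $(\Sigma,U)$. Excision identifies $H_*(\Sigma,U) \cong \bigoplus_i H_*(N_i,\partial N_i)$, and a direct computation with the pair $(D^2\times S^1, T^2)$ gives $H_2(N_i,\partial N_i)\cong\Z$, generated by the meridian disc (whose boundary is $[\mu_i]$), and $H_1(N_i,\partial N_i)=0$. Since $\Sigma$ is a $\Q$HS, Poincar\'e duality forces $H_2(\Sigma;\Z)=0$ and $H_1(\Sigma;\Z)$ is finite. The long exact sequence then collapses to
\[ 0 \longrightarrow \bigoplus_{i=1}^r \Z\cdot[\mu_i] \longrightarrow H_1(U;\Z) \longrightarrow H_1(\Sigma;\Z) \longrightarrow 0, \]
the injection sending the $i$-th generator to $[\mu_i]$.

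Finally I would quotient this short exact sequence by the subgroup $\bigoplus_i n_i\Z\cdot[\mu_i]$ of the left term. This is a standard diagram-chase: if $0\to A\to B\to C\to 0$ is exact and $A'\subseteq A$, then $0\to A/A' \to B/A' \to C\to 0$ is exact. Applied here with $A' = \bigoplus n_i\Z[\mu_i]$, it yields exactly
\[ 0 \longrightarrow \bigoplus_{i=1}^r \Z/(n_i) \longrightarrow H_1^{orb}(\Sigma;\Z) \longrightarrow H_1(\Sigma;\Z) \longrightarrow 0, \]
as desired. Since both outer groups are finite, so is $G=H_1^{orb}(\Sigma;\Z)$, and hence the UAOC is a finite cover.

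The argument is essentially routine once the orbifold homology is identified with $H_1(U;\Z)/\langle n_i[\mu_i]\rangle$; the only subtle points are verifying the injectivity $\bigoplus \Z[\mu_i]\hookrightarrow H_1(U;\Z)$ (which uses $H_2(\Sigma;\Z)=0$, i.e.\ the $\Q$HS hypothesis in its strongest form) and taking care that the meridians $[\mu_i]$ are genuinely $\Z$-linearly independent in $H_1(U;\Z)$ rather than merely $\Q$-linearly independent. Both follow from the excision computation above.
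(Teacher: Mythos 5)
Your proof is correct and follows essentially the same route as the paper: the identification $H_1^{orb}(\Sigma)\cong H_1(\Sigma-\bigcup_i\gamma_i)/(n_1\mu_1,\dots,n_r\mu_r)$ is the paper's Lemma 1.5, and your long-exact-sequence/excision computation of $H_1$ of the knot exterior, using $H_2(\Sigma;\Z)=0$ from the $\Q$HS hypothesis, is exactly the paper's Lemma 1.4. The concluding quotient of the short exact sequence matches the paper's deduction of the Proposition from those two lemmas.
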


The Proposition follows from two Lemmas.

\begin{lemma}  If $\Sigma$ is a $\Q$HS, there is an exact sequence 
$$0\rightarrow \bigoplus_{i=1}^r \Z\rightarrow H_1(\Sigma-\bigcup_i\gamma_i)\rightarrow H_1(\Sigma)\rightarrow 0,$$
where the first map is dotting with $(\mu_1,\cdots,\mu_r)$.  
\begin{proof} The long-exact sequence of the pair $(\Sigma, \Sigma-\bigcup_i\gamma_i)$ gives
$$0=H_2(\Sigma)\rightarrow H_2(\Sigma, \Sigma-\bigcup_i\gamma_i)\rightarrow
 H_1(\Sigma-\bigcup_i\gamma_i)\rightarrow H_1(\Sigma)\rightarrow H_1(\Sigma, \Sigma-\bigcup_i\gamma_i)=0.$$
The first term is $0$ since $\Sigma$ is $\Q$HS.  Let $T=\cup_iT_i$ be the union of small closed tubular neighborhoods of the $\gamma_i$, each one a closed 2-disk bundle over an $S^1$; let $T^o$ and $T_i^o$ denote their interiors.  Then the inclusion
 $\Sigma-\bigcup_i\gamma_i \subset \Sigma-T^o$ induces an isomorphism 
 $H_j(\Sigma, \Sigma-\bigcup_i\gamma_i))\cong H_j(\Sigma, \Sigma - T^o)$.  By excision (removing $\Sigma-T$), one has
$$H_j(\Sigma, \Sigma-T^o)\cong H_j(T,T-T^o))=H_j(T,\partial T)=\oplus_iH_j(T_i,\partial T_i).$$
But $H_2(T_i,\partial T_i)=\ker H_1(\partial T_i)\rightarrow H_1(T_i)$ is the free $\Z$-module generated by $\mu_i$, while $H_1(T_i,\partial T_i)=0$, whence the result. 
\end{proof}
\end{lemma}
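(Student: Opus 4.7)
The plan is to derive the claimed sequence from the long exact sequence of the pair $(\Sigma,\Sigma-\bigcup_i\gamma_i)$ together with excision, replacing the relative homology by contributions from disjoint closed tubular neighborhoods $T_i\cong D^2\times S^1$ of the knots. The hope is that the left-end and right-end terms all vanish, collapsing the long exact sequence to the desired four-term sequence, and that the connecting map can be identified with the meridian classes $\mu_i$.

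For the vanishings, I would argue as follows. Since $\Sigma$ is a closed oriented 3-manifold whose rational homology is that of $S^3$, Poincar\'e duality and the universal coefficient theorem give $H_2(\Sigma;\Z)\cong H^1(\Sigma;\Z)\cong \operatorname{Hom}(H_1(\Sigma),\Z)=0$. For the relative groups, excision combined with the homotopy equivalence $\Sigma-\bigcup_i\gamma_i\simeq \Sigma-\bigcup_i T_i^\circ$ gives
$$H_j(\Sigma,\Sigma-\bigcup_i\gamma_i)\cong \bigoplus_i H_j(T_i,\partial T_i).$$
These latter groups I would compute either by Poincar\'e--Lefschetz duality on the solid torus or directly from the long exact sequence of $(T_i,\partial T_i)\cong (D^2\times S^1,T^2)$: the outcome is $H_2(T_i,\partial T_i)\cong \Z$, generated by a meridian disk $D^2\times\{\mathrm{pt}\}$, and $H_1(T_i,\partial T_i)=0$.

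Assembling these pieces, the long exact sequence of the pair shortens to
$$0\to \bigoplus_i\Z\to H_1(\Sigma-\bigcup_i\gamma_i)\to H_1(\Sigma)\to 0,$$
and the connecting map carries the generator of the $i$-th $\Z$ to the boundary of a meridian disk inside $\Sigma-\bigcup_i\gamma_i$, which is precisely $\mu_i$. The only step that requires genuine attention is tracking the generator of $H_2(T_i,\partial T_i)$ through excision and the boundary map so that it is identified with $\mu_i$; everything else is a routine assembly of Poincar\'e duality, excision, and the long exact sequence of a pair.
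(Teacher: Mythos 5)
Your proposal is correct and follows essentially the same route as the paper: the long exact sequence of the pair $(\Sigma,\Sigma-\bigcup_i\gamma_i)$, excision onto the tubular neighborhoods $T_i$, the vanishing of $H_2(\Sigma)$ and $H_1(T_i,\partial T_i)$, and the identification of $H_2(T_i,\partial T_i)\cong\Z$ with the meridian class $\mu_i$. The only cosmetic difference is that you invoke Poincar\'e duality explicitly for $H_2(\Sigma)=0$ and for the solid-torus computation, where the paper reads these off from the exact sequence of $(T_i,\partial T_i)$ directly.
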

\begin{lemma} $H_1^{orb}(\Sigma)\cong H_1(\Sigma-\bigcup_i\gamma_i))/(n_1\mu_1,\cdots, n_r\mu_r).$
\begin{proof}  If $G$ is a group, $N$ a normal subgroup, then abelianizing $G/N$ is the same as abelianizing $G$ and then modding out by the image of $N$ in $G_{ab}$; either method gives $G/G'N$, where $'$ denotes commutator subgroup.  For, every element of the commutator subgroup is a product of commutators, so the map $G'\rightarrow (G/N)'$ is surjective.  Apply now to 
$G=\pi_1(\Sigma -\bigcup_i\gamma_i)$ and $N$ the normal subgroup generated by the relations $\mu_i^{n_i}=1$.  Then $G/N=\pi_1^{orb}(\Sigma),$ and abelianizing gives $H_1^{orb}(\Sigma)$; $G_{ab}=H_1(\Sigma-\bigcup_i\gamma_i),$ and the image of $N$ is $(n_1\mu_1,\cdots, n_r\mu_r)$ . 
\end{proof}

\end{lemma}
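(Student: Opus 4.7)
The plan is to reduce the claim to a purely group-theoretic identity and then apply it to the presentation of $\pi_1^{orb}(\Sigma)$ already recalled in Section~1. By construction $\pi_1^{orb}(\Sigma) = \pi_1(\Sigma - \bigcup_i \gamma_i)/N$, where $N$ is the normal subgroup generated by the relations $\mu_i^{n_i}=1$. Thus $H_1^{orb}(\Sigma)$ is by definition the abelianization of this quotient, and the task is to identify it with $H_1(\Sigma - \bigcup_i \gamma_i)/\langle n_1\mu_1, \ldots, n_r\mu_r\rangle$.

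The main step is the elementary fact: for any group $G$ with normal subgroup $N$, one has $(G/N)^{ab} \cong G^{ab}/\overline{N}$, where $\overline{N}$ denotes the image of $N$ under the abelianization $G \twoheadrightarrow G^{ab} = G/G'$. I would prove this by showing both sides equal $G/(G'N)$. For the left side, any commutator $[xN, yN]$ in $G/N$ lifts to $[x,y] \in G'$, so the natural map $G' \to [G/N, G/N]$ is surjective; hence $[G/N, G/N] = G'N/N$, giving $(G/N)^{ab} = (G/N)/(G'N/N) = G/(G'N)$. For the right side, $G^{ab}/\overline{N} = (G/G')/((G'N)/G') = G/(G'N)$ directly.

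Applying this with $G = \pi_1(\Sigma - \bigcup_i \gamma_i)$ and $N$ the normal closure of $\{\mu_i^{n_i}\}$ yields $H_1^{orb}(\Sigma) \cong H_1(\Sigma - \bigcup_i \gamma_i)/\overline{N}$. Since $H_1(\Sigma - \bigcup_i \gamma_i)$ is abelian, the image $\overline{N}$ of a normal closure coincides with the ordinary subgroup generated by the images of the generators, namely by $n_1\mu_1, \ldots, n_r\mu_r$ (writing the group additively, where $\mu_i^{n_i}$ becomes $n_i\mu_i$). This gives the stated isomorphism. There is essentially no obstacle beyond unwinding definitions: the one subtlety is to observe that in an abelian group, conjugation is trivial, so normal generation and ordinary generation agree after abelianization, which is what allows the passage from the normally generated $N$ to the ordinary subgroup $\langle n_i \mu_i \rangle$.
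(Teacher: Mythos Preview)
Your proof is correct and follows essentially the same approach as the paper: both reduce to the group-theoretic identity $(G/N)^{ab}\cong G/(G'N)\cong G^{ab}/\overline{N}$, justified via the surjectivity of $G'\to (G/N)'$, and then specialize to $G=\pi_1(\Sigma-\bigcup_i\gamma_i)$ with $N$ the normal closure of the $\mu_i^{n_i}$. Your explicit remark that in an abelian target the image of a normal closure equals the subgroup generated by the images of the generators is a welcome clarification, but otherwise the arguments coincide.
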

\begin{remark}
 The Proposition shows one could construct the UAOC by first forming
the UAC $\Sigma'\rightarrow \Sigma$, taking the inverse images of the $\gamma_i$'s, and then making a sequence $\widetilde\Sigma^{ab}\rightarrow \Sigma'$ of cyclic branched covers over these knots.  The orbifold weights on  $\widetilde\Sigma^{ab}$ are equal to $1$.

\end{remark}

  \bigskip

\section{{Covers of singular pairs}}   
The singularity version concerns \emph{singular pairs} $(X,C)$.  Here,
$(X,0)$ is the germ of a complex normal surface singularity (thus
homeomorphic to the cone over its link $\Sigma$), and $C=\sum_{i=1}^r n_iC_i$ is
a positive combination of reduced and irreducible Weil divisors.  Intersecting with the boundary of a small ball neighborhood of the
singular point in a smooth ambient space gives an orbifold link in a
natural way.  

The data of a singular pair $(X,C)$ is more frequently encoded in the literature as a \emph{log pair} $(X,\Delta_X)$, with the \emph{boundary of $X$}  the effective $\Q$-divisor $$\Delta_X=\sum_{i=1}^r(1/n_i)C_i.$$  Log pairs are essential notions in algebraic geometry (e.g. \cite{kol}), and $1/n_i$ could be replaced by any real number between $0$ and $1$.  However, for this paper the singular pair $(X,\sum n_iC_i)$ is a more convenient way to display branching data for branched covers.

We denote the \emph{support of $C$} by $|C|=\cup_{i=1}^rC_i$.
 A \emph{map of pairs}  $f:(X',C')\rightarrow (X,C)$ is a finite map of normal germs $f:X'\rightarrow X$ so that $f(|C'|)=|C|.$ 

\begin{definition}A \emph{log cover of pairs} $f:(X',C')\rightarrow (X,C)$
is a map of pairs with the
following additional properties:
\begin{enumerate}
\item $f^{-1}(|C|)=|C'|$, and $f$ is an unramified covering
  space off this set.
\item For $C=\sum_{i=1}^rn_iC_i$ and  $f^*C_i=\sum_{j=1}^{s_i}m_{ij} C'_{ij}$ (as Weil divisors), one has $m_{ij}|n_i$, all $j$.
\item $C'=\sum_{i=1}^r \sum_{j=1}^{s_i}n_i/m_{ij}C'_{ij}$.

\end{enumerate}
\end{definition}
In other words, for a log cover of pairs, the map $f:X'\rightarrow X$ is ramified only over the $C_i$, and the ramification index  $m_{ij}$ on any curve above it divides $n_i$.  (In particular, if $n_i=1$, no ramification is allowed above $C_i$.)  

\begin{remark}  Note that if a map of pairs satisfies $(1)$, then the conditions $(2)$ and $(3)$ above are together equivalent to a relation on the boundary divisors, namely $$f^*(\Delta_X)=\Delta_{X'}.$$
\end{remark}

We note the easy

\begin{lemma}  Let $f:X'\rightarrow X$ be a finite map of germs of normal surface singularities.  If $f$ is ramified off the singular points, then there are unique minimal divisors $C'$ on $X'$ and $C$ on $X$ so that $f:(X',C')\rightarrow (X,C)$ is a log cover of pairs.
\begin{proof} Let $C_1, \cdots, C_r$ be the irreducible components of the branch curve of $f$ on $X$.  Write $f^*C_i=\sum_{j=1}^{s_i}m_{ij} C'_{ij}$ as before.  Define $n_i=lcm(m_{ij}, j=1,\cdots,s_i)$.  Then set $C=\sum_{i=1}^rn_iC_i$ and  $C'=\sum_{i=1}^r \sum_{j=1}^{s_i}n_i/m_{ij}C'_{ij}$.  The result now follows.
\end{proof}
\end{lemma}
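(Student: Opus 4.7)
The plan is to read off what the data $(C,C')$ must be directly from the definition of a log cover of pairs. First I would observe that for any log cover $f:(X',C')\to(X,C)$ the support $|C|$ must contain the branch locus of $f$, because condition (1) forces $f$ to be \'etale over $X\setminus|C|$. Since $X$ is a normal surface and $f$ is finite, the branch locus is a curve (or empty); call its irreducible components $C_1,\dots,C_r$ and write $f^{*}C_i=\sum_{j=1}^{s_i}m_{ij}C'_{ij}$ with $m_{ij}$ the ramification index along $C'_{ij}$. Next I would argue that no additional components can appear in a minimal $|C|$: any irreducible $D\subset X$ outside the branch locus has all its preimage multiplicities equal to $1$, so conditions (2) and (3) would force equal coefficients on $D$ and each of its preimages in $C'$, and deleting those components from both sides yields a strictly smaller pair still satisfying the log cover axioms. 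Hence in any minimal pair $|C|=\bigcup_i C_i$ and $|C'|=f^{-1}(|C|)=\bigcup_{i,j}C'_{ij}$.

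The only remaining data is the multiplicities $n_i$ of $C=\sum n_iC_i$, because condition (3) then determines $C'=\sum_{i,j}(n_i/m_{ij})C'_{ij}$. Condition (2) says precisely that each $n_i$ is a common multiple of $\{m_{i1},\dots,m_{is_i}\}$, and the coefficients in $C'$ are monotone in the $n_i$'s, so ordering valid pairs by divisibility of the $n_i$ (equivalently, by coefficientwise divisibility of $C$ and $C'$) gives a unique minimum $n_i=\operatorname{lcm}(m_{i1},\dots,m_{is_i})$. Setting $C=\sum_i n_iC_i$ and $C'=\sum_{i,j}(n_i/m_{ij})C'_{ij}$ then satisfies (1)--(3) by construction, and every log cover structure on $f$ refines this one.

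The main obstacle I expect is simply verifying that the branch locus is genuinely a reduced Weil divisor on $X$: either by invoking purity of the branch locus for a finite morphism between normal surface germs, or by arguing directly in dimension two that any finite map from a normal source has ramification locus empty or pure of codimension one. Once that is in hand, the rest of the argument is the essentially formal minimization with respect to the lcm described above; uniqueness is immediate because each decision (the support, and then each $n_i$) is forced.
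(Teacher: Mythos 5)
Your proposal is correct and follows essentially the same route as the paper: identify the branch components $C_i$, write $f^*C_i=\sum_j m_{ij}C'_{ij}$, and take $n_i=\operatorname{lcm}_j(m_{ij})$, with $C'$ then forced by condition (3). The paper's proof is just this construction stated tersely; your additional verifications (that the minimal support is exactly the branch locus, that divisibility forces the lcm, and the appeal to purity of the branch locus) are the details it leaves implicit.
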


       A  log cover of pairs $(X',C')\rightarrow (X,C)$  is said to be \emph{abelian}  (resp., \emph{cyclic}) if an abelian (resp., cyclic)  group $G$ acts on $X'$ with quotient $X$, permuting the curves lying above each $C_i$.  
               
       To make a cyclic cover,  take the $n$th root of an appropriate function $h$ on $X$, and normalize.  Let $A$ be the analytic local ring of $X$, $h\in m_A$ non-$0$.  Then $T^n-h$ is an irreducible polynomial over the quotient field of $A$ iff $h$ is not a $d$th power, where $d|n$.  One may verify this condition as follows: if the divisor $(h)$ of zeroes of $h$ is the sum of irreducible Weil divisors $\sum_{i=1}^rk_iC_i$, then for any $d>1$ dividing $n$ we have that  $(1/d)\sum_{i=1}^r k_iC_i$ is \emph{not} a principal divisor.  In this case, adjoining an $n$th root of $h$ and normalizing gives an $n$-cyclic cover of pairs $(X',C')\rightarrow (X,C)$, for appropriate $C$ and $C'$.  Specifically, examining the cyclic behavior over each $C_i$ and normalizing, we find that $m_{ij}=n/gcd(n,k_i)=n_i$, so that $$C=\sum_{i=1}^r(n/gcd(n,k_i))C_i,\ \ \  C'=\sum_{i=1}^r\sum_{j=1}^{gcd(n,k_i)}C'_{ij}$$ is as in Lemma 3.2.  If $n|k_i$, then the coefficient of $C_i$ is $1$, so that no branching occurs over this curve and it need not be part of $C$.  We summarize in the 
       
       \begin{lemma}  Let $(X,0)$ be a normal germ, $h\in m_A$ a non-$0$ function with divisor of zeroes $\sum_{i=1}^r k_iC_i.$  Suppose $n>1$ is such that $h$ is not a $d$th power, for any $d>1$ which divides $n$.  Then adjoining an $n$th root of $h$ and normalizing gives an $n$-cyclic cover of pairs $(X',C')\rightarrow (X,C),$ with $$C=\sum\\' (n/(gcd(n,k_i))C_i,$$  where $'$ means the sum is taken over those $C_i$ for which $n$ does not divide $k_i$.
       \end{lemma}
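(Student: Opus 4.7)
The strategy is essentially to justify and formalize the computation already sketched in the paragraph preceding the lemma statement. The plan has three main parts: (i) verify that $T^n-h$ is irreducible so that the normalization indeed produces a connected cyclic cover of degree $n$; (ii) analyze the local behavior of the normalization at the generic point of each $C_i$ via Kummer theory to compute the ramification indices and component counts; and (iii) match the resulting data against Definition 3.1 to conclude that we have a cyclic log cover of pairs.

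For step (i), I would argue contrapositively. If $T^n-h$ factors over the quotient field $K$ of $A$, then a classical Kummer-theoretic fact says there exists $d>1$ dividing $n$ and an element $g\in K$ with $h=g^d$. Because $A$ is normal, $g\in A$, contradicting the hypothesis that $h$ is not a $d$-th power for any such $d$. Hence $T^n-h$ is irreducible, $B := A[T]/(T^n-h)$ is a domain, and its normalization $A'$ is a finite $A$-algebra whose fraction field is a degree-$n$ cyclic extension of $K$ with Galois group $\Z/n$ acting by $T\mapsto \zeta T$ for a primitive $n$-th root of unity. Setting $X' := \operatorname{Spec} A'$ (as germs), we obtain a finite cyclic cover $f:X'\to X$ of degree $n$.

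For step (ii), I would localize at the generic point of a fixed branch $C_i$. The local ring of $X$ there is a DVR with uniformizer $\pi_i$, in which $h = u\pi_i^{k_i}$ for a unit $u$. Write $g_i := \gcd(n,k_i)$ and $n_i := n/g_i$, $k_i = g_i \ell_i$ with $\gcd(n_i,\ell_i)=1$. In this DVR the extension $T^n - u\pi_i^{k_i}$ splits (after possibly an étale base change absorbing the unit) into $g_i$ factors corresponding to the choices of $g_i$-th root of unity, each factor being of Eisenstein type of degree $n_i$ in a new uniformizer. Normalizing shows that above $C_i$ the divisor $f^*C_i$ consists of $s_i = g_i$ irreducible components $C'_{ij}$, each appearing with multiplicity $m_{ij}=n_i = n/\gcd(n,k_i)$. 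In particular $m_{ij}$ divides $n_i$ (trivially), and when $n\mid k_i$ we get $m_{ij}=1$, so $f$ is étale over the generic point of $C_i$ and that component contributes nothing to the branch locus. I expect this step to be the technical core; the main obstacle is making the local splitting and Eisenstein argument rigorous in the analytic setting, but these are well-known facts about Kummer/cyclic covers of normal schemes.

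For step (iii), I would collect the local data to form the divisors. The branch curves of $f$ on $X$ are exactly those $C_i$ with $n\nmid k_i$, and for these set $n_i = n/\gcd(n,k_i)$, giving $C = \sum' n_i C_i$ as stated. On $X'$, set $C'=\sum_i\sum_j (n_i/m_{ij}) C'_{ij} = \sum' \sum_{j=1}^{g_i} C'_{ij}$, which is a reduced divisor supported on $f^{-1}(|C|)$. Conditions (1)--(3) of Definition 3.1 are now verified termwise: (1) holds because $f$ is étale off $|C|$ by construction; (2) holds because $m_{ij}=n_i$ divides $n_i$; and (3) is the very definition of $C'$. Finally, the $\Z/n$-action on $X'$ by deck transformations permutes the $C'_{ij}$ lying over each $C_i$ (transitively, since these correspond to the $g_i$-th roots of unity in the local description), so the cover is cyclic in the sense defined earlier. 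This yields the required cyclic log cover of pairs and completes the proof.
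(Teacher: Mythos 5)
Your proposal is correct and follows essentially the same route as the paper, which states the lemma as a summary of the immediately preceding paragraph: irreducibility of $T^n-h$ via Kummer theory together with normality of $A$, and the ramification index $m_{ij}=n/\gcd(n,k_i)$ computed at the generic point (a DVR) of each $C_i$. One small caveat, shared with the paper's own discussion: the number of components over $C_i$ need not equal $\gcd(n,k_i)$ (the residue field extension at the generic point can be nontrivial when the unit part of $h$ there has no $\gcd(n,k_i)$-th root), but since all the $m_{ij}$ over a given $C_i$ coincide by Galois transitivity, this does not affect the asserted formula for $C$ or the fact that $C'$ is the reduced preimage.
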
   
   
   We illustrate the last construction by asking whether there exists an $n$-cyclic cover totally ramified over a given irreducible Weil divisor.  Recall that Cl($X$), the
\emph{divisor class group of $X$}, is the free abelian group
generated by the irreducible Weil divisors on $X$ (or equivalently
height one prime ideals in $A=\mathcal O_{X,0}$), modulo principal
divisors.  
   
   \begin{proposition}  Let $(X,0)$ be a normal surface singularity, $C_1$ an irreducible Weil divisor, $n>1$ a positive integer.  
   \begin{enumerate} 
   \item There exists an n-cyclic cover of pairs $(X',C')\rightarrow (X,nC_1)$ with $C'$ irreducible if and only if the class of $C_1$ in Cl($X$) is divisible by $n$.
   \item Such a cover is unique exactly when the link of $X$ is a $\Q$HS and the discriminant group has order prime to $n$.
      \end{enumerate}
   \begin{proof}
      As explained above, suppose one has an $n$-cyclic cover obtained from a function whose zero divisor is $\sum_{i=1}^rk_iC_i$.  It is branched exactly over those $C_i$ for which $n$ does not divide $k_i.$  To have  a cover as in $(1)$ requires therefore that an $h\in A$ exist with $(h)=k_1C_1+nD$, where  $k_1$ and $n$ are relatively prime. If $uk_1$ is congruent to $1$ mod $n$, we can use $h^u$  with $(h^u)=C_1+nD'$.  Thus, the class of $C_1$ is divisible by $n$.  
      
      Conversely, if the class of $C_1$ is divisible by $n$, then for some $h$ in the quotient field of $A$ and divisor $G$, we can write $$C_1=nG+(h).$$ Writing $h=j/k$, with $j,k \in A$, we have that $(jk^{n-1})=C_1+nG'$, where $G'=(k)-G$. So, adjoining an $n^{th}$ root of the regular function $(jk^{n-1})$  yields a cyclic cover which (by Lemma 2.4) is branched only over $C_1$.
      
         
      As for uniqueness, recall that adjoining $n$th roots of two elements $h$ and $h'$ gives the same field extension if and only if $h/h'$ is an $n$th power.  Suppose there is an effective non-Cartier divisor $C$ so that $nC$ is Cartier, with $(\alpha)= nC$.  Then the Cartier divisors of $h$ and $h\alpha$ differ by a multiple of $n$; but extracting $n$th roots gives different field extensions.  So, the uniqueness question is exactly whether there are non-trivial elements of order dividing $n$ in the divisor class group.  If the link of $X$ is not a $\Q$-homology sphere, then the divisor class group contains a vector space modulo a free $\Z$-module, hence contains elements of all finite order.  If the link is a $\Q$HS, then as recalled in the next section the divisor class group is a direct sum of a complex vector space plus the (finite) discriminant group.  The result follows.
   \end{proof}
\end{proposition}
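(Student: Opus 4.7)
My strategy is to prove (1) and (2) separately, using Lemma 2.4 to translate each statement about cyclic covers into a statement about divisor classes in $\mathrm{Cl}(X)$. Since $\C$ contains $n$-th roots of unity, every $n$-cyclic cover of the normal germ $(X,0)$ is (after normalization) a Kummer extension $A[T]/(T^n-h)$ for some $h\in\m_A$, so throughout I may assume the cover is built as in Lemma 2.4.

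For the forward direction of (1), write $(h)=\sum k_iC_i$: the hypothesis that only $C_1$ is branched forces $n\mid k_i$ for $i\neq 1$, and irreducibility of $C'$ forces $s_1=\gcd(n,k_1)=1$. Choosing $u$ with $uk_1\equiv 1\pmod n$ gives $(h^u)=C_1+nD'$, so $[C_1]=-n[D']\in\mathrm{Cl}(X)$ is divisible by $n$. For the converse, assume $[C_1]=n[G]$, so $C_1-nG=(f)$ for a rational $f=j/k$ with $j,k\in A$; then $(jk^{n-1})=C_1+n((k)-G)$. Since $jk^{n-1}\in A$ is regular, this effective divisor has coefficient $\equiv 1\pmod n$ at $C_1$ and $\equiv 0\pmod n$ elsewhere, so $jk^{n-1}$ is not a $d$-th power for any $d>1$ dividing $n$; Lemma 2.4 then produces the desired $n$-cyclic cover branched only over $C_1$, with $C'$ irreducible because $s_1=\gcd(n,1)=1$.

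For (2), the plan is to identify the set of isomorphism classes of admissible covers with a torsor over $\mathrm{Cl}(X)[n]$. Two covers $A[T]/(T^n-h)$ and $A[T]/(T^n-h')$ agree as $A$-algebras iff $h/h'$ is an $n$-th power in the fraction field of $A$. Since $(h)=C_1+nG'_h$ and $(h')=C_1+nG'_{h'}$, the divisor of $h/h'$ equals $n(G'_h-G'_{h'})$, which is $n$ times a principal divisor exactly when $[G'_h]=[G'_{h'}]$ in $\mathrm{Cl}(X)$. Conversely, any $\alpha\in\mathrm{Cl}(X)[n]$ can be realized by a suitable modification of $h$, producing a distinct cover; hence uniqueness is equivalent to $\mathrm{Cl}(X)[n]=0$. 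I would then invoke the standard structure of $\mathrm{Cl}(X)$ for normal surface singularities: if the link is a $\Q$HS, $\mathrm{Cl}(X)$ equals the finite discriminant group $D(\Gamma)$ (as recalled in the next section), so $\mathrm{Cl}(X)[n]=0$ iff $\gcd(n,|D(\Gamma)|)=1$; otherwise $\mathrm{Cl}(X)$ contains a subgroup $V/L$ with $V$ a positive-dimensional $\C$-vector space and $L$ a full-rank lattice, which has nontrivial $n$-torsion for every $n>1$.

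\textbf{Main obstacle.} The most delicate step will be the torsor argument in (2), specifically realizing every $\alpha\in\mathrm{Cl}(X)[n]$ by a \emph{regular} admissible $h$: the natural modification $h\mapsto hg$ by a rational representative of $\alpha$ can introduce poles, and one must absorb these via $n$-th powers of regular functions (which does not change the cover) without disturbing the shape $(h)=C_1+nG'$. A secondary concern is the structural assertion on $\mathrm{Cl}(X)$ in the non-$\Q$HS case, which requires the standard description of $\mathrm{Cl}$ for normal surface germs as an extension of a complex torus part by a finite part.
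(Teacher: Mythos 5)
Your proposal is correct and follows essentially the same route as the paper: Kummer theory plus the $jk^{n-1}$ regularization trick for (1), and for (2) the identification of the ambiguity in the cover with $\mathrm{Cl}(X)[n]$ via Mumford's structure theorem — which the paper phrases by exhibiting $h$ versus $h\alpha$ for a suitable $n$-torsion class rather than as a torsor, and which already handles the "main obstacle" you flag by the same $jk^{n-1}$ device. The only slip is your assertion that $\mathrm{Cl}(X)$ \emph{equals} the finite discriminant group in the $\Q$HS case — it is the direct sum of that finite group with a complex vector space of dimension $p_g$ — but since the vector space is divisible and torsion-free this does not affect the computation of $\mathrm{Cl}(X)[n]$.
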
   
We illustrate the last Proposition with the following
  \begin{example} Consider the $A_1$ singularity
  $X=\{(x,y,z)\in\C^3:xz-y^2=0\}$, $C_1$  the line $\{y=z=0\}$, and $C_2$ the irreducible Cartier divisor $\{z-x^2=0\}$.
   The link of $(X,0)$ is the lens space
  $L(2,1)=\R P^3$ with fundamental group $\Z/(2)$, and the divisor class group is cyclic of order $2$, generated by the class of $C_1$.  The principal divisor $(z)=2C_1$, and $z$ is not a power of any element.  We consider $n$th root constructions of $z$ and $z(z-x^2)$.
      \begin{enumerate}
   \item For $n=2k+1$,  the $n$th root of $z$ gives $(X',C'_1)\rightarrow (X, (2k+1)C_1)$.   $X'$ is another $A_1$ singularity $\{xv-u^2=0\}$, $C_1'$ is the line $\{u=v=0\}$, and the coordinates are related by $y=uv^k, z=v^{2k+1}.$  The group action on $X'$ is given by $(x,u,v)\mapsto (x,\zeta u, \zeta^2 v)$, where $\zeta$ is a primitive $(2k+1)$st root of $1$.
    \item  For $n=2k$, the $n$th root of $z$ gives $(\C^2,\{v=0\})\rightarrow (X,kC_1)$.  A generator of the covering group acts via $(u,v)\mapsto (-u,\zeta v)$, where $\zeta$ is a primitive $(2k)$th root of $1$.  Thus, $x=u^2, y=-uv^k, z=v^{2k}.$  The cover factors through the UAC, given by taking the square root of $z$ (the case $k=1$).
    \item The square roots of $z-x^2$ and $z(z-x^2)$ give (Lemma $2.4$) non-isomorphic double covers of $(X,2C_2)$.  In the first case, the cover is obtained by adjoining $T$ with $T^2=z-x^2$, giving a $D_4$ singularity $y^2=x(T^2+x^2)$.  For the second cover, one adjoins $S^2=z(z-x^2)$; normalize via $P=Sx/y$, since $P^2=x(z-x^2)$, and note also that $SP=y(z-x^2)$.  This second cover, of embedding dimension $5$, is an $8/3$ cyclic quotient singularity.   Example $3.4$ below gives the full story.
      \end{enumerate}
\end{example}

\section{{universal abelian log cover of a pair}}

If $(X,0)$ has $\Q$HS link $\Sigma$, then for topological reasons the UAC on the boundary extends to a UAC of $X-\{0\}$, so taking integral close of $X$ in this extension gives what we call the  UAC $(X',0)\rightarrow (X,0)$.  One can also construct $(X',0)$ directly using a covering of a good resolution of $X$, as in \cite{okuma}, (3.1).

 If $(X,C)$ is a  singular pair for which $X$ has $\Q$HS link $\Sigma$, one might in principle try to extend a cover of the corresponding orbifold associated to $\Sigma$ to a branched cover of $X-\{0\}$ and then of $X$.  It seems more natural to make a direct algebro-geometric construction.  We have the following definition/theorem.
 
\begin{theorem}  Suppose $(X,C)$ is a singular pair for which $X$ has $\Q$HS link $\Sigma$.  Then there exists a  \emph{universal abelian log
    cover} (or \emph{UALC})  $(X'',C'')\rightarrow(X,C)$, a   
cover of pairs which induces the UAOC on the boundary.
\end{theorem}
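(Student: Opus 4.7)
The plan is to construct $(X'',C'')$ by realizing the UAOC as an étale Galois cover on the smooth complement of $|C|$ in $X\setminus\{0\}$, then taking the normalization of $X$ in the corresponding function field extension.

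Set $U := X\setminus(|C|\cup\{0\})$. Since $X$ is topologically the cone over $\Sigma$ and $|C|$ is the cone over $\bigcup_i\gamma_i$, $U$ deformation retracts onto $\Sigma\setminus\bigcup_i\gamma_i$, so $\pi_1(U)\cong \pi_1(\Sigma\setminus\bigcup_i\gamma_i)$. The argument of Lemma 1.5 shows that the quotient of this group by its commutator subgroup and the normal closure of the $\mu_i^{n_i}$ is $H_1^{orb}(\Sigma)$, which by Proposition 1.3 is a finite group $G$ (this is where the $\Q$HS hypothesis enters). Covering space theory produces the corresponding finite étale Galois cover $V\to U$ with group $G$. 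I would then define $X''$ to be the normalization of $X$ in the function field $K(V)$, which is a finite Galois extension of $K(X)=K(U)$. Then $X''$ is a finite normal surface germ over $X$ on which $G$ acts faithfully with quotient $X$, and $X''\to X$ is étale over $U$ by construction.

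Next I would set $C'':=\sum_{i,j} C''_{ij}$, where the $C''_{ij}$ are the irreducible components of the reduced preimage of $C_i$, and verify that $f:(X'',C'')\to(X,C)$ is a log cover of pairs. The ramification index $m_{ij}$ of $f$ along $C''_{ij}$ equals the order in $G$ of the image of the meridian $\mu_i$; by Proposition 1.3 this image has order exactly $n_i$, so $m_{ij}=n_i$ and the coefficient $n_i/m_{ij}$ of $C''_{ij}$ in $C''$ is $1$. Conditions (1)--(3) of Definition 2.1 (equivalently $f^*\Delta_X=\Delta_{X''}$ via Remark 2.2) then hold.

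Finally one must check that the boundary link of $X''$, with its induced branch knots and weights, agrees with the UAOC $\widetilde\Sigma^{ab}$. Off the knots this is automatic, since the étale structures on $U$ and on $\Sigma\setminus\bigcup_i\gamma_i$ match by construction. Near each $C_i$, the map $X''\to X$ has the local analytic structure of an $n_i$-fold cyclic branched cover along a smooth curve, which restricts on the link to the orbifold quotient of order $n_i/n'_{ij}=n_i$ required by Definition 1.1. I expect this last boundary identification to be the main technical step: one must match the algebraic normalization against the topological description from Remark 1.6 of $\widetilde\Sigma^{ab}$ as a tower of cyclic branched covers over the UAC, and verify that the resulting three-manifolds together with their knot/weight data coincide.
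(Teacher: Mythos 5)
Your proposal is correct, but it follows the route the paper explicitly mentions and then declines to take: the opening of Section 3 notes that ``one might in principle try to extend a cover of the corresponding orbifold associated to $\Sigma$ to a branched cover of $X-\{0\}$ and then of $X$,'' and instead gives ``a direct algebro-geometric construction.'' The paper's proof uses Mumford's decomposition $Cl(X,0)\cong H^1(\tilde X,\mathcal O_{\tilde X})\oplus \E^*/\E$ to write $[C_i]=n_i[D_i]+t_i$ with $t_i$ torsion, passes to the UAC $g:X'\to X$ where the torsion dies so that $g^*C_i-n_ig^*D_i=(h_i')$ is principal, and then builds the UALC as an explicit tower of cyclic covers by adjoining $n_i$-th roots of the regular functions $h_i=j_ik_i^{n_i-1}$ and normalizing; this yields Corollary 3.2 (UAC followed by $r$ cyclic covers) and, more importantly, the explicit root-extraction description that the splice-quotient construction of Section 6 relies on. Your approach instead identifies $\pi_1(U)\cong\pi_1(\Sigma-\bigcup\gamma_i)$ via the cone structure, takes the finite cover of $U$ classified by $\ker(\pi_1(U)\to H_1^{orb}(\Sigma))$, and extends it over $|C|\cup\{0\}$. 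This makes the boundary identification with the UAOC essentially tautological and makes universality among abelian log covers transparent (any such cover is unramified over $U$ with $\mu_i$ of order dividing $n_i$ in its group, hence is a quotient of yours), and your computation that the ramification index along each $C''_{ij}$ is exactly $n_i$ --- because the $\mu_i$ are $\Z$-independent in $H_1(\Sigma-\bigcup\gamma_i)$ by Lemma 1.4, so $\mu_i$ has order exactly $n_i$ in $H_1^{orb}(\Sigma)$ --- is the right argument. The one step you should make precise is the passage from the analytic covering $V\to U$ to the normal finite germ $X''\to X$: speaking of ``the normalization of $X$ in $K(V)$'' is slightly circular before you know the cover extends, and the clean citation is the Grauert--Remmert theorem that a finite analytic covering of $X\setminus A$ ($A$ a nowhere dense analytic subset) extends uniquely to a normal finite branched covering of $X$ (with the induced $G$-action by uniqueness, and with a single point over $0$ because the UAOC of $\Sigma$ is connected). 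What the paper's longer construction buys is explicitness --- equations for the cover via root extraction --- which is exactly what is needed later; what yours buys is brevity and a cleaner proof that the boundary cover is the UAOC.
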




\begin{proof}  Let  $(\tilde{X},E)\rightarrow
(X,0)$ be a good resolution, and $\E$ the lattice generated by the
irreducible exceptional curves $E_i$. 
Following Mumford \cite{mum}, one has a natural direct
sum decomposition
$$Cl(X,0)\cong H^1(\tilde{X},\mathcal O_{\tilde{X}})\oplus \E^*/\E,$$
the direct sum of a complex vector space of dimension $p_g(X)$ plus
the finite \emph{discriminant group} isomorphic to $H_1(\Sigma;\Z)$, of order $|det(E_i\cdot E_j)|$
(see \cite{okuma}, (2.3) for a nice discussion.)  The universal abelian cover
$g:(X',0)\rightarrow (X,0)$ is unramified off the singular point, 
with covering group $\E^*/\E$.  We indicate how to form the UALC via a
sequence of cyclic covers of $X'$.  

Write $C=\sum n_iC_i$ (we may as well assume all $n_i>1$).
Since a vector space is a divisible group, we may write the class of
$C_i$ in Cl($X$) uniquely as $[C_i]=n_i[D_i]+t_i$, where $t_i$ is a torsion
divisor class.  But the class of $g^*(t_i)$ is trivial, i.e.,
$g^*[C_i]-n_ig^*[D_i]$ is trivial in Cl($X'$). Thus
$g^*C_i-n_ig^*D_i$ is a principal divisor, given by a unique (up to scalar) function $h_i'$
in the quotient field of $A:=\mathcal O_{ X',0}$.
Writing $h_i'=j_i/k_i,$ with $j_i,k_i \in A$, we see that
$h_i=j_ik_i^{n_i-1}\ \in A$ is a regular function, defining an
effective divisor of the form $g^*(C_i)+n_iF_i$, for some $F_i$.  Since $g$ is a covering off the singular point, $g^*(C_i)$ is a reduced divisor.  So,
one can form the cyclic cover by adjoining an $n_i$-th root of $h_i$ and
normalizing; this depends only on $h_i'$.   Since the covering group of $X'$ acts on $h_i$ by multiplication by a root of $1$, adjoining a root of $h_i$ keeps the new cover Galois (and abelian) over $(X,0)$.

The other $g^*(C_j)$ share no common components with each other, so remain reduced when pulled back to the cyclic cover given by a root of $h_i$.  Thus we can repeat the cyclic quotient construction, and eventually obtain a series of cyclic covers branched to order $n_j$ over every $g^*(C_j)$, and hence over $C_j$.  The final local ring is the normalization obtained after adjoining to $A$ the $n_i^{th}$ root of $h_i$, all $i=1,\cdots,r$.
\end{proof}
We  summarize in the

\begin{corollary}  Let $(X,C=\sum_{i=1}^rn_iC_i)$ be a singular pair for which $X$ has $\Q$HS link $\Sigma$. Then the universal abelian log cover $(X'',C'')\rightarrow (X,C)$ is constructed by forming the universal abelian cover of $X$, and then taking cyclic covers of orders $n_1$ through $n_r.$  The covering group $G$ sits in a short exact sequence
$$0\rightarrow \Z/(n_1)\oplus \cdots \oplus 
\Z/(n_r)\rightarrow G\rightarrow
H_1(\Sigma;\Z)\rightarrow 0\,.$$ 
\end{corollary}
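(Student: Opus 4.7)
The plan is to read off both claims from the construction already carried out inside the proof of Theorem~3.1. That proof exhibits $X''$ as a two-step tower $X''\to X'\to X$: the first step is the universal abelian cover $g\colon (X',0)\to (X,0)$ with covering group $\mathcal E^*/\mathcal E\cong H_1(\Sigma;\Z)$, and the second step is the successive normalized Kummer extension $\mathcal O_{X',0}[h_1^{1/n_1},\dots,h_r^{1/n_r}]$, where each $h_i=j_ik_i^{n_i-1}\in \mathcal O_{X',0}$ was chosen so that $(h_i)=g^*C_i+n_iF_i$ is reduced along $g^*C_i$. The ``construction'' claim is therefore immediate; what remains is to identify $G=\mathrm{Gal}(X''/X)$.

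To compute $G$, I would first verify that the Kummer tower over $X'$ really is $\bigoplus_i \Z/(n_i)$-Galois: Lemma~2.4, applied on $X'$ using that $g^*C_i$ appears with coefficient $1$ in $(h_i)$, shows that each individual extension is cyclic of exact order $n_i$. The distinct extensions are independent because the reduced components $g^*C_i$ are pairwise different prime Weil divisors on $X'$, so that no ratio $h_i^a h_j^b$ can be an $n$-th power for $(a,b)$ nonzero with $n$ dividing the orders. This identifies $\mathrm{Gal}(X''/X')=\bigoplus_{i=1}^r\Z/(n_i)$, and, combined with $\mathrm{Gal}(X'/X)=H_1(\Sigma;\Z)$, it yields the asserted short exact sequence
$$0\to \bigoplus_{i=1}^r \Z/(n_i)\to G\to H_1(\Sigma;\Z)\to 0,$$
provided one knows $X''\to X$ really is Galois with group $G$ abelian.

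The latter is the main obstacle, and I would handle it by reusing the observation made at the end of the proof of Theorem~3.1: because the torsion class $t_i=[C_i]-n_i[D_i]$ dies after pullback by $g$, the UAC covering group $H_1(\Sigma;\Z)$ acts on $h_i$ by a scalar, i.e.\ through a character $\chi_i\colon H_1(\Sigma;\Z)\to \C^*$. Consequently each element of $H_1(\Sigma;\Z)$ lifts to an automorphism of $X''$ that multiplies $h_i^{1/n_i}$ by an $n_i$-th root of unity, and these lifts commute with the Kummer action, which itself multiplies $h_i^{1/n_i}$ by a root of unity. Thus all of $G$ acts diagonally on the tuple $(h_1^{1/n_1},\dots,h_r^{1/n_r})$ and is therefore abelian. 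The only bookkeeping delicacy is that the $h_i$ are determined only up to scalar in $\mathcal O_{X',0}^{*}$; this ambiguity is harmless because a rescaling merely modifies the character $\chi_i$ without affecting the extension.

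Finally, to see that this cover is the UALC rather than just some abelian log cover, I would restrict to the boundary link $\Sigma$: the tower becomes ``UAC of $\Sigma$, followed by $r$ cyclic branched covers of orders $n_i$ over the inverse images of the $\gamma_i$,'' which is exactly the construction of the UAOC recalled in Remark~1.6. Hence $(X'',C'')\to (X,C)$ induces the UAOC on the boundary and so agrees with the UALC built in Theorem~3.1, and the exact sequence for $G$ coincides with the one in Proposition~1.3.
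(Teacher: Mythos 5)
Your proposal is correct and follows essentially the same route as the paper: there the Corollary is presented as a summary of the proof of Theorem 3.1, with the covering group read off from the tower $X''\to X'\to X$ (kernel $\bigoplus_{i}\Z/(n_i)$ from the successive cyclic covers, quotient $H_1(\Sigma;\Z)$ from the UAC) and identified on the boundary with the group of Proposition 1.3. The only difference is that you make explicit the Kummer-theoretic verifications --- exact order $n_i$ via Lemma 2.4, independence of the $r$ cyclic extensions by comparing coefficients along the distinct prime divisors $g^*C_i$, and the abelianness of $G$ --- which the paper compresses into the remark that the covering group of $X'$ acts on $h_i$ by a root of unity, so that ``adjoining a root of $h_i$ keeps the new cover Galois (and abelian).''
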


\begin{example}  In Example (2.6(2)) above, taking a fourth root of the function $z$ gives the UALC of $(X,2C_1)$; thus, the covering group is cyclic, so the exact sequence above is not split.
\end{example}
\begin{example} In Example (2.(6.3)) above, the UALC of $(X,2C_2)$ is obtained by first taking the UAC of $X$; set $x=u^2, y=uv, z=v^2$, and the inverse image of $C_2$ is $v^2-u^4=0$.  Taking the double branched cover gives $\C[u,v,t]/t^2-v^2+u^4$ as UALC, an $A_3$-singularity, with curve the two components of $t=v^2-u^4=0$.  The covering group is the Klein four-group $\Z/2\times \Z/2$, generated by $(t,v,u)\mapsto (-t,v,u)$ and $(t,v,u)\mapsto (t,-v,-u)$.  The invariants of the first involution is the UAC, while that of the second is the $D_4$-singularity described as the ``first case'' in Example (2.(6.3)).  The invariants of the other involution, which is $-I$ on the coordinates, gives the $8/3$ cyclic quotient; in the previous notation, $S=tv$ and $P=tu$.
\end{example}  
\begin{example} Consider the pair consisting of $(\C^2,0)$ and $r\geq
  3$ lines $L_1,\cdots,L_r$ through the origin, with multiplicities $n_1,\cdots,n_r \geq 2$.
  The UALC of  $(\C^2,\sum_{i=1}^r n_iL_i)$ is a Brieskorn complete intersection of type
  $(n_1,\cdots,n_r).$  Specifically, for any $r-2$ distinct numbers
  $a_1,\cdots ,a_{r-2},$ consider the singularity $(Y,0)$ defined
  by $$x_i^{n_i}+x_{r-1}^{n_{r-1}}+a_ix_{r}^{n_r}=0,\ \
  i=1,\cdots,r-2,$$ and the map $f:(Y,0)\rightarrow (\C^2,0)$ defined
  on the ring level by
  $$x\mapsto x_r^{n_r},\ \ y\mapsto x_{r-1}^{n_{r-1}}.$$
 Thus, $y+a_ix$ maps to $-x_i^{n_i}$, so the map is branched over the
 corresponding line with multiplicity $n_i, i=1,\cdots,r-2 $.  It is
 not hard to see that $f$ is the UALC over these lines and the
 coordinate axes, and is the quotient by the group $ \Z/(n_1)\oplus
 \cdots \oplus \Z/(n_r)$ acting diagonally on the coordinates in
 $\C^r$.
\end{example}

   \section{{Orbifold homology group from plumbing}}
 
  As always,  $(\tilde{X},E)\rightarrow (X,0)$ will denote a good resolution of a singularity with $\Q$HS link $\Sigma$, where $E= E_1\cup\cdots \cup E_m$.  From the weighted resolution dual graph $\Gamma$, one forms the lattice $\E=\oplus \Z\cdot E_i$ and its dual $\E^*$, with dual basis $e_1,\cdots,e_m$ defined by $e_i(E_j)=\delta_{ij}$.  The following is well-known:
 
 \begin{proposition} $H_1(\Sigma)$ is isomorphic to the \emph{discriminant group} $D(\Gamma)=\E^*/\E$, with generators  $e_1,\cdots, e_m$, modulo the relations 
 $$\sum_{j=1}^m (E_i\cdot E_j)e_j=0, \ \ i=1,2,\cdots,m.$$
 \end{proposition}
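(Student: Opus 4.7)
The plan is to realize $\Sigma$ as the boundary of a plumbed $4$-manifold and read off $H_1(\Sigma)$ from the long exact sequence of this pair together with Lefschetz duality. Let $P$ be a closed regular neighborhood of the exceptional divisor $E$ in $\tilde X$, built by plumbing $D^2$-bundles over the $E_i$ according to $\Gamma$, so that $\partial P = \Sigma$. Since $\Gamma$ is a tree and each $E_i\cong \mathbb P^1$ is simply connected, $P$ deformation retracts onto a simply connected $2$-complex; in particular $H_1(P)=0$, and $H_2(P)$ is freely generated by the classes of the $E_i$, giving a natural identification $H_2(P)\cong \E$.

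Next I would consider the long exact sequence of the pair $(P,\Sigma)$. Because $\Sigma$ is a $\Q$HS, $H_2(\Sigma;\Z)$ is torsion, but since $H_2(P)\cong \E$ is free and the map $H_2(\Sigma)\to H_2(P)$ factors through a free group, the relevant portion of the sequence reduces (after checking this torsion point, or by computing rationally first and then integrally using that the intersection form is nondegenerate) to the short exact sequence
$$0\to H_2(P)\to H_2(P,\Sigma)\to H_1(\Sigma)\to 0.$$
By Lefschetz duality and the universal coefficient theorem,
$$H_2(P,\Sigma)\cong H^2(P)\cong \Hom(H_2(P),\Z)=\E^*,$$
with dual basis $e_1,\dots,e_m$ satisfying $e_i(E_j)=\delta_{ij}$.

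Under these identifications, the connecting homomorphism $H_2(P)\to H_2(P,\Sigma)$ becomes the map $\E\to\E^*$ sending $E_i\mapsto \sum_j (E_i\cdot E_j)e_j$; this is the standard fact that the composition of the inclusion $H_2(P)\to H_2(P,\Sigma)$ with Poincar\'e--Lefschetz duality $H_2(P,\Sigma)\cong H^2(P)$ is the intersection pairing. Therefore $H_1(\Sigma)\cong \E^*/\E = D(\Gamma)$, and as an abelian group it is presented by the generators $e_1,\dots,e_m$ with relations $\sum_{j}(E_i\cdot E_j)e_j=0$ for $i=1,\dots,m$.

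The only real point to be careful about is identifying the connecting map with the intersection form; this is the standard computation for plumbed manifolds (see e.g.\ Mumford \cite{mum}) and is the step that deserves explicit justification. Everything else is formal from the long exact sequence once one knows $H_1(P)=0$ and $H_2(\Sigma;\Z)$ contributes nothing to the quotient, both of which follow from $\Gamma$ being a tree of rational curves and $\Sigma$ being a $\Q$HS.
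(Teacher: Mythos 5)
Your argument is correct and is essentially the paper's own proof: the paper likewise uses the exact sequence $0=H_2(\Sigma)\to H_2(\tilde X)\to H_2(\tilde X,\Sigma)\to H_1(\Sigma)\to H_1(\tilde X)=0$, identifies $H_2(\tilde X,\Sigma)\cong H^2(\tilde X)\cong \E^*$ by Lefschetz duality, and observes that the connecting map is the intersection form (working with $\tilde X$ itself rather than a plumbing neighborhood $P$, which makes no difference since $\tilde X$ deformation retracts onto $E$). The only cosmetic point is that for a $\Q$HS one has $H_2(\Sigma;\Z)=0$ outright (it is free by duality and has rank zero), so your hedging about torsion is unnecessary.
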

 
 This result follows from examining the
exact sequence
   $$H_2(\Sigma)=0\rightarrow H_2(\tilde{X})\rightarrow  H_2(\tilde{X},\Sigma)\rightarrow H_1(\Sigma)\rightarrow H_1(\tilde{X})=0.$$
Note $H_2(\tilde{X})=\oplus \Z [E_i]\cong \E$ and $H_2(\tilde{X},\Sigma)\cong H^2(\tilde{X})\cong H_2(\tilde{X})^*\cong \E^*$.  More precisely, let $K_i \subset \Sigma$ be a meridian knot over $E_i$, i.e., the boundary of a complex disk $A_i$ in $\tilde{X}$  which is transversal to $E_i$.  Then the classes $[A_i]$ form a basis of $H_2(\tilde{X},\Sigma)$, dual to the $[E_i]$, and the image of $[A_i]$ in $H_1(\Sigma)$ is  $[K_i]$.
 
 The $\Q$-valued pairing on $\E^*$ allows one to define $e_i\cdot e_j \in \Q$; by linear algebra, the matrix $(e_i\cdot e_j)$ is the inverse of the matrix $(E_i\cdot E_j)$.  From (\cite{nw3}, (1.2) and (9.2)), one can calculate the  topologically defined \emph{linking number} $\ell$ of  meridian knots on $\Sigma$:
 
 \begin {proposition}  The linking number satisfies 
 $$\ell(K_i,K_j)=-e_i\cdot e_j.$$ 
 \end{proposition}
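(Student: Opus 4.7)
The plan is to realize $e_j$ as a concrete rational combination of the $E_k$ and use that to produce a rational Seifert $2$-chain in $\Sigma$ for $K_j$. Working rationally, I identify $e_j\in\E^*\otimes\Q$ with the element $\Gamma_j:=\sum_k q^{jk}E_k\in\E\otimes\Q$, where $(q^{jk})$ is the inverse matrix of $(E_i\cdot E_j)$. Viewed as a rational $2$-chain in $\tilde X$, $\Gamma_j$ satisfies $\Gamma_j\cdot E_i=\delta_{ij}=A_j\cdot E_i$, so $A_j-\Gamma_j$ pairs trivially with every $E_i$. Since $\Sigma$ is a $\Q$HS, the pairing $H_2(\tilde X,\Sigma;\Q)\otimes H_2(\tilde X;\Q)\to\Q$ is nondegenerate with $H_2(\tilde X;\Q)=\bigoplus_k\Q\cdot[E_k]$, which forces $[A_j-\Gamma_j]=0$ in $H_2(\tilde X,\Sigma;\Q)$.

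Unpacking this vanishing, there exist a rational $3$-chain $G$ in $\tilde X$ and a rational $2$-chain $F_j\subset\Sigma$ with $A_j-\Gamma_j=\partial G+F_j$. Applying $\partial$ and using that each $E_k$ is a cycle gives $\partial F_j=K_j$, so $F_j$ is a rational Seifert chain for $K_j$ in $\Sigma$ and by definition $\ell(K_i,K_j)=(K_i\cdot F_j)_\Sigma$. A local orientation check using the ``outward-normal-first'' convention for $\Sigma=\partial\tilde X$ and the complex orientation of the meridian disk $A_i$, together with the fact that the outward normal to $A_i$ at $K_i$ and the outward normal to $\tilde X$ at $\Sigma$ coincide (both point radially away from $E$), identifies $(K_i\cdot F_j)_\Sigma=(A_i\cdot F_j)_{\tilde X}$.

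With this in hand the computation closes quickly:
\[
A_i\cdot F_j \;=\; A_i\cdot(A_j-\Gamma_j-\partial G) \;=\; -\,A_i\cdot\Gamma_j \;=\; -\sum_k q^{jk}(A_i\cdot E_k) \;=\; -q^{ij}\;=\;-(e_i\cdot e_j).
\]
Here $A_i\cdot A_j=0$: for $i\neq j$ the small meridian disks can be chosen disjoint, while $A_i\cdot A_i=0$ because a parallel fiber in the normal disk bundle of $E_i$ is disjoint from $A_i$. The term $A_i\cdot\partial G$ vanishes because $\partial G$ is an absolute $2$-cycle representing $0$ in $H_2(\tilde X;\Q)$, and the relative--absolute intersection pairing depends only on homology classes.

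The main obstacle is the sign bookkeeping in the identification $(K_i\cdot F_j)_\Sigma=(A_i\cdot F_j)_{\tilde X}$: the outward-normal conventions and the complex orientations have to mesh so that the final formula carries a minus sign rather than a plus sign. A secondary technical point is legitimizing $A_i\cdot\partial G=0$ given that $A_i$ is only a relative cycle, which is handled by invariance of the $H_2(\tilde X,\Sigma;\Q)\otimes H_2(\tilde X;\Q)\to\Q$ pairing under absolute boundaries; the remaining steps are linear algebra.
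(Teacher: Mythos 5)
Your argument is correct, but note that the paper itself gives no proof of this Proposition: it is stated with a citation to \cite{nw3}, (1.2) and (9.2), so there is no in-text argument to compare against. What you supply is the standard self-contained derivation, and it is essentially the argument underlying the cited results: identify $e_j$ with the rational exceptional cycle $\Gamma_j=\sum_k q^{jk}E_k$ dual to $E_j$ under the intersection form, use nondegeneracy of the pairing $H_2(\tilde X,\Sigma;\Q)\otimes H_2(\tilde X;\Q)\to\Q$ to conclude $[A_j]=[\Gamma_j]$ in $H_2(\tilde X,\Sigma;\Q)$, extract a rational Seifert chain $F_j$ for $K_j$ inside $\Sigma$, and evaluate the linking number as an intersection number in the bounding $4$-manifold. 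The individual evaluations are right ($A_i\cdot A_j=0$ for meridian disks over distinct points --- which is also how the case $i=j$ must be read --- and $A_i\cdot\Gamma_j=q^{ij}=e_i\cdot e_j$), and the resulting sign is forced to be consistent with the positivity of $\ell(K_i,K_j)$ used in Lemma 4.5, since the inverse of the negative definite tree matrix has nonpositive entries. One organizational suggestion: the term $A_i\cdot\partial G$ is awkward at the chain level, because the specific chain $\partial G=A_j-\Gamma_j-F_j$ contains $-F_j$ and therefore meets $\partial A_i=K_i$, so ``perturb and invoke homology invariance'' needs a little care near the boundary. It is cleaner to drop $\partial G$ entirely: after clearing denominators, $dA_j-F_j$ is an \emph{absolute} $2$-cycle homologous to $d\Gamma_j$, and pairing it with the relative class $[A_i]$ gives $dq^{ij}$ homologically while the chain-level count gives $0-(K_i\cdot F_j)_\Sigma$ (up to the collar orientation convention you flag), whence $\ell(K_i,K_j)=\tfrac1d(K_i\cdot F_j)_\Sigma=-q^{ij}=-e_i\cdot e_j$. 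The orientation check is genuinely the only delicate point, and it is a routine collar computation.
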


A surface pair $(X,C)$ gives rise to an orbifold $(\Sigma, \gamma_i, n_i)$ on the boundary, and we wlll prove one can compute the orbifold homology $H_1^{orb}(\Sigma)$ from a \emph{log resolution} of the pair.  That is, we consider a good resolution  $(\tilde{X},E)\rightarrow (X,0)$ for which the full inverse image of $C$ has strong normal crossings.  A proper transform $\tilde{C}_i$ of $C_i$ intersects one exceptional curve $E_i$ transversally, so that its boundary $\gamma_i$ is a meridian knot of type $K_i$ over  a point of $E_i$; to it is associated an orbifold weight $n_i \geq 1$.  For convenience, one can blow up further so that each exceptional curve intersects at most one $\tilde{C}_i$.  

More generally, assume one is given a good resolution with exceptional curves $E_1,\cdots,E_m$, and a meridian knot $K_i$ and weight $n_i$ associated to each $E_i$.  This defines an orbifold structure $(\Sigma, \gamma_i, n_i)$ on the link. The goal will be to prove the 
 
 \begin{theorem} For the orbifold $(\Sigma, \gamma_i, n_i)$, the orbifold homology $H_1^{orb}(\Sigma)$ is generated by $e_1,\cdots, e_m$, modulo the relations 
 $$\sum_{j=1}^m n_i(E_i\cdot E_j)e_j=0, \ \ i=1,2,\cdots,m.$$
 \end{theorem}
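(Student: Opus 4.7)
The plan is to apply Lemma~1.5 to reduce $H_1^{orb}(\Sigma)$ to ordinary singular homology, and then to read off both the group and the key relations from the topology of the resolution $\tilde X$.  Setting $U:=\Sigma-\bigcup_i K_i$, Lemma~1.5 (whose proof goes through verbatim for arbitrary weights $n_i\geq 1$) gives $H_1^{orb}(\Sigma)\cong H_1(U)/\langle n_i\mu_i\rangle$.  So the theorem follows once we establish: (a) $H_1(U)$ is free abelian of rank $m$, on lifts $e_1,\ldots,e_m$ of the generators $[K_j]\in H_1(\Sigma)$ coming from Lemma~1.4, and (b) in $H_1(U)$ one has $\mu_i=\pm\sum_j(E_i\cdot E_j)e_j$, so that quotienting by $n_i\mu_i$ yields the cokernel of $(n_i(E_i\cdot E_j))_{ij}$.

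To prove (a), I would introduce the 4-manifold $Y:=\tilde X-\bigcup_i T(A_i)^\circ$, where $A_i$ is the meridian disk with $\partial A_i=K_i$ and $T(A_i)\cong A_i\times D^2$ are pairwise disjoint tubular neighborhoods arranged so that $A_i\cap E_j=\emptyset$ for $i\ne j$ and $A_i\cap E_i=\{p_i\}$.  The standard deformation retract of $\tilde X$ onto $E$ may be chosen to collapse each $T(A_i)$ to $p_i$, so $Y\simeq E-\bigcup_i\{p_i\}$, a tree of contractible punctured 2-spheres glued at the plumbing intersection points; since $\Gamma$ is a tree, $Y$ is contractible, and hence $\partial Y$ is an integer homology $3$-sphere.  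Decompose $\partial Y=U'\cup V$ with $U'\simeq U$, $V=\bigsqcup_i(A_i\times S^1)$, and $U'\cap V=\bigsqcup_i T_i$ a disjoint union of tori.  Mayer--Vietoris collapses to an isomorphism $\Z^{2m}\cong H_1(U'\cap V)\cong H_1(U)\oplus H_1(V)$; since $H_1(V)\cong\Z^m$ is free, $H_1(U)$ is free of rank $m$.

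For (b), I would use the long exact sequence of the triple $(\tilde X,\Sigma,U)$.  Since $H_3(\tilde X,\Sigma)\cong H^1(\tilde X)=0$ and $H_1(\Sigma,U)=0$ (the latter from the excision step in Lemma~1.4), one obtains a split short exact sequence
$$0\to \textstyle\bigoplus_k\Z\mu_k \to H_2(\tilde X,U) \to \textstyle\bigoplus_j\Z[A_j]\to 0,$$
with splitting by lifts $\langle A_j\rangle$ obtained from $A_j$ by pushing $\partial A_j=K_j$ slightly off itself into $U$ (so $\partial\langle A_j\rangle=e_j$).  The identity $[E_i]=\sum_j(E_i\cdot E_j)[A_j]$ in $H_2(\tilde X,\Sigma)$ from Proposition~4.1 lifts to $[E_i]-\sum_j(E_i\cdot E_j)\langle A_j\rangle=\sum_k c_{ik}\mu_k$ in $H_2(\tilde X,U)$ for integers $c_{ik}$; applying the boundary map $\partial$ yields $-\sum_j(E_i\cdot E_j)e_j=\sum_k c_{ik}\mu_k$ in $H_1(U)$.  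The main obstacle is to pin down $(c_{ik})$: since $A_j\cap E_i=\emptyset$ for $j\ne i$, only the single transverse intersection at $p_i$ contributes to the correction, forcing $c_{ik}=\pm\delta_{ik}$.  A local bidisk model at $p_i$ (where $E_i$ and $A_i$ appear as Hopf-linked coordinate disks on the boundary $S^3$) identifies the contribution with $\pm\mu_i$; as a cross-check, unimodularity of $(c_{ik})$ is forced by the freeness of $H_1(U)$ established in (a).  One thereby obtains $\mu_i=\mp\sum_j(E_i\cdot E_j)e_j$, and the theorem follows.
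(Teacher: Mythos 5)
Your argument is correct in outline, and it takes a genuinely different route from the paper's main proof. The paper (Proposition 4.4, due to Pedersen) works entirely inside the $3$--manifold: it produces a $2$--chain in $\Sigma-\bigcup\gamma_i$ with boundary $dK-\sum_j d\,\ell(K,\gamma_j)\mu_j$, invokes $\ell(K_i,K_j)=-e_i\cdot e_j$ (Proposition 4.2), and then multiplies by the order $\delta$ of $H_1(\Sigma)$, inverts the linking matrix over $\Q$, and uses the $\Z$--independence of the $\mu_i$ to recover the integral relations. You instead work in the $4$--manifold $\tilde X$: the exact sequence of the triple $(\tilde X,\Sigma,U)$ produces the relation $\mu_i=\mp\sum_j(E_i\cdot E_j)e_j$ directly from the class of $E_i$, with no linking numbers and no matrix inversion, and your contractible $4$--manifold $Y$ yields the clean statement that $H_1(U)$ is free of rank $m$, which the paper never isolates (it works with generators $K_j,\mu_j$ and an undetermined relation matrix $(n_{ij})$ instead). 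Your step (b) is in fact close in spirit to Looijenga's alternate argument in Remark 4.6 --- both realize the relation as the boundary of a $2$--chain obtained from $E_i$ by excising its intersections with transverse disks --- but you package it homologically rather than via an explicit meromorphic section of the normal bundle. What your route buys is a self-contained integral computation; what the paper's buys is that it never leaves $\Sigma$ and reuses Proposition 4.2.

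Two points to firm up. First, the identification $c_{ik}=\pm\delta_{ik}$ is the crux and needs an actual mechanism, not just the observation that $A_j\cap E_i=\emptyset$ for $j\neq i$: the clean way is the relative intersection pairing $H_2(\tilde X,\overline U)\otimes H_2(\tilde X,\bigcup T_k)\to\Z$ coming from Lefschetz duality for the decomposition $\partial\tilde X=\overline U\cup\bigcup T_k$, against which $[E_i]\cdot[A_k]=\delta_{ik}$, the pushed-off classes $\langle A_j\rangle$ pair trivially with every $[A_k]$, and the generator of $H_2(T_k,\partial T_k)$ pairs to $\pm\delta_{jk}$; alternatively, carry out the chain-level surgery of Remark 4.6. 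Be aware that your unimodularity cross-check only forces $\det(c_{ik})=\pm1$, not the diagonal form, so it cannot substitute for this local computation. Second, in (a) the Mayer--Vietoris argument gives $H_1(U)\cong\Z^m$ abstractly but not that the $e_j$ form a basis; that follows only after (b), since the $e_j$ map onto generators of $H_1(\Sigma)$ and, by the relation just proved, their span contains the $\mu_k$ generating the kernel of $H_1(U)\to H_1(\Sigma)$, so they generate the free group $H_1(U)$ and hence form a basis. With those two points made explicit the proof is complete.
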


In other words, multiply the rows of the intersection matrix by the corresponding orbifold weight, and take the cokernel of the resulting non-symmetric matrix.  Also, note that if $n_i=1$, then $K_i$ and $\gamma_i$ are irrelevant.

By Lemma 1.5, to prove the Theorem one needs to find first the cohomology of $H_1(\Sigma-\bigcup \gamma_i)$.  By Lemma 1.4 and the above discussion, one has as generators the classes of $K_i$  and $\mu_j$, and the complete set of relations must be of the form
 $$(*)\ \ \ \sum_{j=1}^m(E_i\cdot E_j)[K_j]=\sum_{j=1}^m n_{ij}[\mu_j],\ \ i=1,\cdots,m,$$ for some integral matrix $(n_{ij})$.

Note there is a natural orientation of the $\mu_j$; a small tubular neightborhood $T_j$ of $\gamma_j$ in $\Sigma$ is a complex disk in $E_j$ times $\gamma_j$, and $\mu_j$ is the boundary of a transverse disk.  

Because of Lemma 1.5, Theorem 4.3 follows easily from the next Proposition.  The proof is due to Helge M\o{}ller Pedersen, after which an alternate proof due to Eduard Looijenga will be outlined.

\begin{proposition} With the notation above, the relations in $H_1(\Sigma-\bigcup \gamma_i)$ are
        $$[\mu_i]+\sum_{j=1}^m (E_i\cdot E_j)[K_j]=0, \ \ i=1,\cdots,m.$$
        \begin{proof} We use linking numbers to relate the classes of the $K_i$ and the $\mu_j$.
\begin{lemma} Suppose $K$ is a meridian as above, with $[K]$ of order $d$ in $H_1(\Sigma)$.  Then there is a closed oriented two-chain in $\Sigma-\bigcup\gamma_i$ whose boundary is homotopic to $$dK-\sum_{j=1}^md\ell(K,\gamma_j)\mu_j.$$
\begin{proof} There is a closed oriented 2-chain $A$ in $\Sigma$ whose boundary consists of $d$ copies of $K$.  We may modify $A$ around the intersection points with all the $\gamma_j$ so that there are $d|\ell(K,\gamma_j)|$ transverse intersection points of $A$ with $\gamma_j$ (itself a meridian curve).  As $\ell(K,\gamma_j)$ is positive by Proposition 4.2, $\gamma_j$ intersects $A$ positively at these points.  A small closed tubular neighborhood of $\gamma_j$ intersects $A$ in $d\ell(K,\gamma_j)$ small disks, and the boundary of each is homotopic to $+\mu_j$.  Removing the interior of these disks from $A$ therefore gives a closed $2$-chain in $\Sigma-\bigcup\gamma_i$ with boundary $dK-\sum d\ell(K,\gamma_j)\mu_j$, as claimed.
\end{proof}
\end{lemma}
   Denoting by $\delta=|\det(E_i\cdot E_j)|$ the order of $H_1(\Sigma)$, we multiply by $\delta/d$, and conclude the relations in $H_1(\Sigma-\bigcup \gamma_i)$
   $$\delta [K_i]=\sum_{j=1}^m \delta \ell(K_i,K_j)[\mu_j],\ \ i=1,\cdots,m.$$
   Viewing as a matrix equation, left multiply both sides by $(E_i\cdot E_j)$, which by $(4.2)$ is minus the inverse of  $\ell(K_i,K_j)$; this gives
 $$\delta \sum_{j=1}^m(E_i\cdot E_j)[K_j]=-\delta [\mu_i].$$ 
 Substituting into $(*)$ above yields $$\delta \sum n_{ij}[\mu_j]=-\delta [\mu_i].$$
 But the $[\mu_i]$ are $\Z$-independent in  $H_1(\Sigma-\bigcup \gamma_i)$.  Thus $n_{ij}=-\delta_{ij}$, so that
$$\sum_{j=1}^m(E_i\cdot E_j)[K_j]=-[\mu_i ].$$
  \end{proof}
  \end{proposition}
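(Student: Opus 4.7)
My plan is to use the short exact sequence from Lemma 1.4 to reduce the problem to determining a single integer matrix, and then to pin that matrix down using linking numbers on $\Sigma$.

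First, I would combine Lemma 1.4 with Proposition 4.1. The exact sequence
$$0\to \bigoplus_j \Z[\mu_j]\to H_1(\Sigma-\bigcup\gamma_i)\to H_1(\Sigma)\to 0$$
tells me that the $[K_j]$ (coming from the $e_j$) together with the $[\mu_j]$ generate $H_1(\Sigma-\bigcup\gamma_i)$, and that any relation in $H_1(\Sigma)$ lifts to one in the larger group up to a correction in the kernel. Since $\sum_j(E_i\cdot E_j)e_j=0$ in $H_1(\Sigma)$ by Proposition 4.1, there must exist integers $n_{ij}$ with
$$\sum_j (E_i\cdot E_j)[K_j]=\sum_j n_{ij}[\mu_j]\qquad(i=1,\dots,m).$$
Because the $[\mu_j]$ are $\Z$-independent in $H_1(\Sigma-\bigcup\gamma_i)$ (again by Lemma 1.4), the matrix $(n_{ij})$ is uniquely determined, and the whole job reduces to computing it.

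To evaluate $(n_{ij})$, I would produce explicit $2$-chains in $\Sigma-\bigcup\gamma_i$ whose boundaries relate the $[K_j]$ to the $[\mu_j]$. Let $\delta=|\det(E_i\cdot E_j)|$, the order of $H_1(\Sigma)$; then for each $j$ there is a singular $2$-chain $A_j$ in $\Sigma$ with $\partial A_j=\delta K_j$. Perturb $A_j$ to meet each $\gamma_k$ transversally; the signed intersection count is $\delta\,\ell(K_j,\gamma_k)=\delta\,\ell(K_j,K_k)$. Excising small disks in $A_j$ around these intersections gives a $2$-chain in $\Sigma-\bigcup\gamma_i$ whose boundary is $\delta K_j-\sum_k\delta\,\ell(K_j,K_k)\mu_k$ (using that $\mu_k$ is the oriented boundary of a meridional disk and that $\ell(K_j,K_k)>0$ by Proposition 4.2). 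Hence in $H_1(\Sigma-\bigcup\gamma_i)$,
$$\delta[K_j]=\sum_k\delta\,\ell(K_j,K_k)[\mu_k].$$

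Finally, I would substitute this relation into the display for $(n_{ij})$ and use Proposition 4.2 to simplify. Multiplying the boxed identity by $\delta$ and inserting the preceding formula for $\delta[K_j]$ gives
$$\sum_k\delta\Bigl(\sum_j(E_i\cdot E_j)\,\ell(K_j,K_k)\Bigr)[\mu_k]=\delta\sum_j n_{ij}[\mu_j].$$
By Proposition 4.2, the matrix $\bigl(\ell(K_j,K_k)\bigr)$ equals $-(E_i\cdot E_j)^{-1}$, so the inner sum collapses to $-\delta_{ik}$, and the left side becomes $-\delta[\mu_i]$. Independence of the $[\mu_j]$ then forces $n_{ij}=-\delta_{ij}$, yielding the claimed relation $[\mu_i]+\sum_j(E_i\cdot E_j)[K_j]=0$. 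The main obstacle is the geometric step of the second paragraph: constructing the bounding $2$-chain and tracking signs so that each intersection of $A_j$ with $\gamma_k$ contributes exactly $+\mu_k$ to the boundary; this is where the positivity statement for linking numbers in Proposition 4.2 is essential.
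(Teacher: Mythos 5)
Your proposal is correct and follows essentially the same route as the paper: reduce to an undetermined integer matrix $(n_{ij})$ via Lemma 1.4, compute $\delta[K_j]=\sum_k\delta\,\ell(K_j,K_k)[\mu_k]$ by excising meridional disks from a bounding $2$-chain, invert via Proposition 4.2, and use $\Z$-independence of the $[\mu_j]$ to force $n_{ij}=-\delta_{ij}$. The only (inconsequential) difference is that you bound $\delta K_j$ directly, whereas the paper first bounds $dK_j$ with $d$ the order of $[K_j]$ and then scales by $\delta/d$.
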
 
  \begin{remark} Looijenga's approach is for each $E_i$ to find the boundary of an appropriate closed oriented $2$-chain in $\Sigma-\bigcup \gamma_i$.  

Suppose $\gamma_i$ lies over $r\in E_i$; $p_1,\cdots, p_t $ are intersection points of $E_i$ with its neighbors; and $q_1,\cdots, q_b$  are any other distinct points of $E_i$.  There is a meromorphic section $s$ of the normal bundle $N\rightarrow E_i$, with simple poles at the $q_k$ and no zeroes.  Let $E_i'$ result from removing from $E_i$ the interiors of small disks centered at $r$, the $p_j$, and the $q_k$; one now has the section $s:E'_i\rightarrow N$, which can be scaled into a section of an appropriate unit circle bundle $N'\subset N$.  Now consider the images in $N'$ of the boundaries of the removed disks.  

As $N'\rightarrow E_i$ is locally $D\times S^1\rightarrow D$, a fibre $K_i$ is homotopic to the inverse image of the boundary of $D$ (consider the family $(te^{i\theta}, e^{i\theta}), 0\leq t\leq 1)$.   Since $s$ has a simple pole at $q_k$, the image of the boundary of $D$ has the opposite orientation as the inverse image (cf. $z\mapsto (z,1/z)\mapsto (z,|z|/z)$).   So the $b$ circles contribute $-bK_i$  to the boundary of $s(E'_i)$.  The circle around $r$ contributes the knot $\mu_i$ to the boundary.  

Since disks were removed from the intersections of $E_i$ with other $E_k$, the 2-chain $s(E'_i)$ can be pushed up from $N'$ minus these disks to $\Sigma-\bigcup\gamma_i$.  Locally $E_i\cap E_k$ looks analytically like $zw=0$, and $\Sigma$ can be viewed as $|zw|=\epsilon$.  The boundary of the disk $\Delta_k=\{|z|\leq \delta, w=0\}$ has boundary that can be lifted to $\{|z|=\delta, w=\epsilon/\delta\}$, which on $\Sigma$ gives a meridian of $E_k$, of class $K_k$.

Therefore, the closed oriented $2$-chain $s(E'_i)\subset \Sigma-\bigcup \gamma_i$ has boundary homotopic to
$$-bK_i+\mu_i+\sum_{k\neq i}(E_i\cdot E_k)K_k=\mu_i+\sum_k (E_i\cdot E_k)K_k.$$
This expression becomes zero in homology.
\end{remark}

 \section{{Action of the orbifold discriminant group}}
 
   With $(\tilde{X},E)$ as before, suppose $E_1, \cdots, E_t$ are the ends (or \emph{leaves}) of the graph $\Gamma$.  Recall the diagonal representation of the discriminant group $D(\Gamma)$:
 
 \begin{proposition}(\cite{nw1}, (5.2), (5.3)).   There is a natural injection $D(\Gamma)\hookrightarrow (\Q/\Z)^t$, given by $e\mapsto (e\cdot e_1, \cdots,e\cdot e_t)$.  Exponentiating $\Q/\Z \hookrightarrow \C^*$ via $r\mapsto \text{exp}(2\pi ir)$, one has a faithful diagonal action of $D(\Gamma)$ on $\C^t$, where the entries are $t$-tuples of $\det (\Gamma)$-th roots of unity.
 \end{proposition}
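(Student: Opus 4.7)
The plan is to (i) check that $e\mapsto(e\cdot e_1,\dots,e\cdot e_t)$ descends from $\E^*$ to a homomorphism $D(\Gamma)=\E^*/\E\to(\Q/\Z)^t$, (ii) prove injectivity, and (iii) exponentiate to obtain the diagonal action on $\C^t$. Descent is immediate: any basis vector $E_k\in\E$ satisfies $E_k\cdot e_i=\delta_{ki}\in\Z$, so $\E$ maps into $\Z^t$.

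The injectivity is the heart of the matter. Suppose $e\in\E^*$ satisfies $e\cdot e_i\in\Z$ for each leaf index $i=1,\dots,t$. Write $e=\sum_j c_jE_j\in\E\otimes\Q$; since $E_j\cdot e_i=\delta_{ij}$, the hypothesis reads $c_i\in\Z$ for every leaf $E_i$, and the goal is to show $c_j\in\Z$ for every $j$. I would induct on the number of vertices by pruning one leaf at a time. Let $E_1$ be a leaf with unique neighbor $E_2$. From $e\cdot E_1=(E_1\cdot E_1)c_1+c_2\in\Z$ together with $c_1\in\Z$, we obtain $c_2\in\Z$. Now consider the subtree $\Gamma'$ obtained by deleting $E_1$, and set $e':=\sum_{j\ge 2}c_jE_j$. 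The relations $e\cdot E_i\in\Z$ for $i\ge 2$, combined with $c_1\in\Z$ in the $i=2$ case, show that $e'\cdot E_i\in\Z$ for every vertex $E_i$ of $\Gamma'$, so $e'$ lies in the dual of the lattice of $\Gamma'$. The leaves of $\Gamma'$ are those of $\Gamma$ other than $E_1$, together with $E_2$ in case $E_2$ has valence two in $\Gamma$; in every case the corresponding coefficients of $e'$ are already integers. The induction hypothesis applied to $\Gamma'$ yields $c_j\in\Z$ for all $j\ge 2$; combined with $c_1\in\Z$ this gives $e\in\E$.

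For the exponentiated statement, $D(\Gamma)$ is finite of order $\det(\Gamma)=|\det(E_i\cdot E_j)|$, so each class is annihilated by $\det(\Gamma)$, and every component $e\cdot e_i\in\Q/\Z$ therefore lies in $\tfrac{1}{\det(\Gamma)}\Z/\Z$. Composing with $\Q/\Z\hookrightarrow\C^*$, $r\mapsto\exp(2\pi i r)$, produces the desired faithful diagonal action of $D(\Gamma)$ on $\C^t$, whose entries are $\det(\Gamma)$-th roots of unity.

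The main obstacle is the injectivity step. It is precisely there that the tree structure of $\Gamma$ is used: a tree always contains a leaf whose deletion again yields a tree, so that integrality of the coefficients can be propagated from the leaves inward one step at a time via the relations $e\cdot E_i\in\Z$.
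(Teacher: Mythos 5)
Your proof is correct. Note that the paper itself supplies no argument for this Proposition --- it is imported wholesale from \cite{nw1}, (5.2)--(5.3) --- so the comparison is really with the proof there. The route in \cite{nw1} is structural: injectivity follows because the discriminant pairing $D(\Gamma)\times D(\Gamma)\to\Q/\Z$ induced by the nonsingular intersection form is perfect, and because the leaf classes $e_1,\dots,e_t$ generate $D(\Gamma)$ (this is \cite{nw1}, (5.1), quoted in Remark 5.4 of the present paper); an element pairing integrally with a generating set then pairs integrally with all of $\E^*$ and so lies in $\E$. Your leaf-pruning induction replaces both inputs with a direct coefficient computation: writing $e=\sum_j c_jE_j$, the identity $e\cdot e_i=c_i$ turns the hypothesis into integrality of the leaf coordinates, and the relations $e\cdot E_i\in\Z$ propagate integrality inward one vertex at a time. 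This is self-contained and elementary; the one thing you use silently is that the intersection form restricted to the sublattice $\E'$ spanned by the surviving vertices is again nonsingular (so that $(\E')^*$ embeds in $\E'\otimes\Q$ and the inductive statement makes sense), which holds because the form is negative definite, and that adjacent curves satisfy $E_i\cdot E_j=1$, which holds for a good resolution. Since perfectness of the discriminant pairing makes the generation statement of \cite{nw1}, (5.1) equivalent to the injectivity you establish, your argument in effect also reproves that the leaf classes generate $D(\Gamma)$. The well-definedness and exponentiation steps are routine and correctly handled.
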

 
Given a pair $(X,C)$, there is a log resolution so that each $\tilde{C}_i$ intersects an end $E_i$ of the graph (cf. (6.4) below).  For this orbifold situation, to those ends $E_i$ we are given knots $\gamma_i$ of type $K_i$ plus a weight $n_i$; assign a weight $n_j=1$ to any other end.   We represent the extra data by a decorated graph $\Gamma^*$, by adding to each end of $\Gamma$ an arrow and the associated weight.   
\begin{definition}  The orbifold discriminant group $D(\Gamma^*)$ is the group generated by $e_1,\cdots,e_m$ modulo the relations
$$\sum_{j=1}^m n_i(E_i\cdot E_j)e_j=0, \ \ i=1,2,\cdots,t.$$
$$\sum_{j=1}^m (E_i\cdot E_j)e_j=0, \ \ i=t+1,\cdots,m.$$
\end{definition}
According to Theorem 4.3, $D(\Gamma^*)$ is isomorphic to the orbifold homology group (all weights of interior $E_j$ equal $1$).
Modding out the $e_i$'s by the further relations with all $n_i=1$ (as in Proposition 4.1) gives the surjection $\Phi:D(\Gamma^*)\rightarrow D(\Gamma)$, with kernel the sum of cyclic groups of orders $n_1$ through $n_t$.  This is the same as the general result in Proposition 1.1.

$D(\Gamma^*)$ has a natural diagonal representation compatible with that for $D(\Gamma)$, using  the 
 ``power map'' $N:(\C^*)^t\rightarrow (\C^*)^t$ given on points by $$(a_1,\cdots, a_t)\mapsto (a_1^{n_1},\cdots,a_t^{n_t}).$$

\begin{proposition} There is a natural injection $D(\Gamma^*)\hookrightarrow (\C^*)^t$ and a commutative diagram  
       $$D(\Gamma^*)          \hookrightarrow               (\C^*)^t$$          
                       $$  \ \     \downarrow  \Phi     \ \\ \ \ \ \ \     \ \ \                                          \downarrow  N$$
                        $$   D(\Gamma)    \hookrightarrow                                                     (\C^*)^t$$ 
\begin{proof} We claim there is a map $D(\Gamma^*)\rightarrow (\Q/Z)^t$ given by $$e\mapsto ((e\cdot e_1)/n_1, (e\cdot e_2)/n_2,\cdots,(e\cdot e_t)/n_t).$$
One needs to check that the relations defining $D(\Gamma^*)$ go to $0$ in every entry of $(\Q/\Z)^t$. But  in the $k$th entry,
$$\sum_{j=1}^m n_i(E_i\cdot E_j)e_j\mapsto \sum_{j=1}^m n_i(E_i\cdot E_j)e_j\cdot e_k/n_k=n_i/n_k\sum_{j=1}^m (E_i\cdot E_j)e_j\cdot e_k=n_i/n_k \delta_{ik},$$
which equals $0$ or $1$, in either case $0$ in $\Q/\Z$.   After exponentiation, one gets a map to $(\C^*)^t$.  This gives the commutative diagram asserted.

There remains to check injectivity in the top row, or equivalently injectivity of $\ker \Phi\rightarrow \ker N$. As these groups have the same order $n_1\cdots n_t$, we show surjectivity.  But by definition, the map above sends $E_1$ to $(1/n_1,0\cdots,0)$ in $(\Q/Z)^t$, hence to one of the obvious generators of $\ker N$.\end{proof}                        
       \end{proposition}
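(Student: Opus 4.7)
The plan is to define a candidate map $D(\Gamma^{*})\to(\Q/\Z)^{t}$ that refines the known map for $D(\Gamma)$ by dividing each coordinate by the orbifold weight, then bundle three checks: well-definedness, commutativity of the square, and injectivity. Concretely, I would set
$$e\longmapsto\bigl((e\cdot e_{1})/n_{1},\,(e\cdot e_{2})/n_{2},\,\ldots,\,(e\cdot e_{t})/n_{t}\bigr),$$
where the pairing is the $\Q$-valued pairing on $\E^{*}$. The idea is that the orbifold relations $\sum_{j}n_{i}(E_{i}\cdot E_{j})e_{j}=0$ are precisely the relations needed to make division by $n_{i}$ legitimate modulo $\Z$ in each coordinate.

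First I would verify that the generator relations of $D(\Gamma^{*})$ map to $0$ in $(\Q/\Z)^{t}$. The key computation uses the identity $\sum_{j}(E_{i}\cdot E_{j})(e_{j}\cdot e_{k})=\delta_{ik}$, which holds because $(e_{i}\cdot e_{j})$ is the inverse matrix of $(E_{i}\cdot E_{j})$. Testing the relation for $i\le t$ in the $k$-th coordinate gives $n_{i}\delta_{ik}/n_{k}$, which is $0$ when $i\ne k$ and $1$ when $i=k$, hence zero in $\Q/\Z$. For the relation with $i>t$ the $k$-th coordinate evaluates to $\delta_{ik}/n_{k}$, which vanishes for $k\le t<i$. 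Composing with the exponential map $\Q/\Z\hookrightarrow\C^{*}$ then gives a homomorphism $D(\Gamma^{*})\to(\C^{*})^{t}$.

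Commutativity of the square is automatic once one notices that in additive coordinates $N$ multiplies the $i$-th entry by $n_{i}$: going around either way sends $e$ to $(e\cdot e_{1},\ldots,e\cdot e_{t})$, which is exactly the diagonal representation of $D(\Gamma)$ from Proposition 5.1.

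The heart of the argument, and the part I expect to require the most care, is injectivity of the top arrow. I would deduce this from the snake diagram: the bottom arrow is injective by Proposition 5.1, so it suffices to prove that the restricted map $\ker\Phi\to\ker N$ is injective. Since both kernels are finite of the same order $n_{1}\cdots n_{t}$, it is enough to prove surjectivity. For $i\le t$, the element $E_{i}=\sum_{j}(E_{i}\cdot E_{j})e_{j}$ lies in $\ker\Phi$ (it vanishes in $D(\Gamma)$ while only satisfying $n_{i}E_{i}=0$ in $D(\Gamma^{*})$), and its image in $(\Q/\Z)^{t}$ is $(\delta_{i1}/n_{1},\ldots,\delta_{it}/n_{t})=(0,\ldots,1/n_{i},\ldots,0)$. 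After exponentiation these are the standard generators of $\ker N=\prod_{i=1}^{t}\mu_{n_{i}}$, giving surjectivity and completing the proof.
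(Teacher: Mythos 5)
Your proposal is correct and follows essentially the same route as the paper: the same map $e\mapsto((e\cdot e_{1})/n_{1},\ldots,(e\cdot e_{t})/n_{t})$, the same verification via $\sum_{j}(E_{i}\cdot E_{j})(e_{j}\cdot e_{k})=\delta_{ik}$, and the same reduction of injectivity to surjectivity of $\ker\Phi\to\ker N$ by comparing orders and exhibiting the images of the $E_{i}$ as the standard generators. The only (harmless) difference is that you spell out the check for the relations with $i>t$ and the commutativity computation, which the paper leaves implicit.
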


 \begin{remark}  According to \cite{nw1}, (5.1), the discriminant group $D(\Gamma)$ can be generated by any collection of $t-1$ of the classes $e_1, \cdots, e_t$ of the leaves.  However, $D(\Gamma^*)$ might require more than $t-1$ generators, and even the $t$ leaf classes might not generate.  
  \end{remark}
 \begin{example}  Consider the decorated graph $\Gamma^*$ below of a $D_4$ singularity:
 
 $$
\xymatrix@R=8pt@C=30pt@M=0pt@W=0pt@H=0pt{
&&&&&\\
&&&&2&&&&\\
&&&&\lineto[d]\ar[u]&&\\
&&&&\lineto[d]\\\
&&&&\righttag{\bullet}{-2}{6pt}&&&&\\
&&&&\lineto[u]&&\\
&&\overtag{~}{2}{12pt}&\ar[l]\undertag{\bullet}{-2}{4pt}\lineto[r]&\undertag{\bullet}{-2}{4pt}\lineto[r]\lineto[u]&\undertag{\bullet}{-2}{4pt}\lineto[r]&\overtag{\ar}{2}{12pt}\\
&&\\
&&\\
&&\\
}
$$ 
Index the three leaves clockwise by $e_1,e_2,e_3$, and the central curve by $f$.  Then the discriminant group $D(\Gamma)$ is the direct sum of two cyclic groups of order $2$, generated by $e_1$ and $e_2$; note $e_3=e_1+e_2$, and $f=0)$. However, $D(\Gamma^*)$ is the direct sum of two cyclic groups of order $4$, generated by $e_1$ and $e_2$, and an additional cyclic group of order $2$, generated by $f$ (which is not in the span of $e_1, e_2, e_3$).  So the kernel of the natural projection is $<2e_1>\oplus <2e_2>\oplus <f>$.
 \end{example}  
                  

   On the other hand, we have the following
     
     \begin{proposition}  The orbifold discriminant group $D(\Gamma^*)$ requires at most $t$ generators.
    \begin{proof}  Let $s=m-t$ denote the number of interior vertices of $\Gamma$.    We claim that the second set of $s$ relations in $(5.2)$ imply that $s$ of the $e_i$ can be written as combinations of the $t$ remaining $e_j$, which then suffice as generators of $D(\Gamma^*)$.  
   
    We use the notation $e_i$ to denote the corresponding vertex of $\Gamma$.
    An interior vertex $e_k$ of valence $r$ gives rise to a relation $-(E_k \cdot E_k)e_k+e_{k,1}+\cdots +e_{k,r}=0$, so that any $e_{k,j}$ can be written as  a combination of the $r$ other $e$'s.    In other words, if $e_j$ is any neighbor of $e_k$ , then $e_j$ can be written as a combination of  $e_k$ and $e_k$'s remaining neighbors.  
    
      Choose any interior vertex $f$, which we label the ``center."   We will connect every adjacent pair of vertices with a red or green arrow pointing away from $f$.  First, insert a red arrow from $f$ to one of its neighbors $f'$.  From $f$ to any of its other neighbors insert a green arrow.  Next, if the end of an arrow is an end of $\Gamma$, do nothing more at that vertex.  Otherwise, at any vertex which is the end of an arrow, choose a red arrow to one of its (other) neighbors; if there  are any further neighbors, choose a green arrow to each of them.  Continue until one reaches all the end vertices of $\Gamma$.   In this way, for every interior vertex, there is one red arrow emerging.  Call the $s$ vertices which are the end of red arrows ``red vertices", while the remainder (there are $t$ of them) are ``green vertices."     
       
      We claim that each red vertex can each be written as a combination of the green ones.  Every vertex has a distance from the center, so we prove the result by induction on the distance.  We have already said that the center $v$ is green, as are all but one of its neighbors.  In particular, its red neighbor can be written as a combination of $v$ and $v$'s green neighbors.   Next, let $e$ be any red vertex.  Then it is the end of a red arrow emanating from a vertex $e'$ which is closer to the center.  Then $e$ is a combination of $e'$ and $e'$'s other neighbors.  The other neighbors of $e'$ are $e''$, which is  closer to the center, and possibly extra ones (if the valency of $e'$ is at least $3$).  But those extra neighbors are green, since only one red arrow can emerge from $e'$, and that is the one to $e$.  Thus, $e$ can be written as a combination of green vertices plus $e'$ and $e''$.  But $e'$ and $e''$ are closer to the center, so the inductive step applies (whether or not either is red or green).      
     \end{proof}
     \end{proposition}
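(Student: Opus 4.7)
The plan is to turn the $s = m-t$ interior-vertex relations (the unweighted ones, not rescaled by any $n_i$) into a recipe for eliminating $s$ of the generators $e_1,\dots,e_m$. Each interior relation, because $\Gamma$ is a tree of pairwise transversely intersecting exceptional curves, has the shape
\[
 d_k\, e_k \;=\; \sum_{j \sim k} e_j,
\]
where $d_k = -E_k\cdot E_k > 0$ and the sum runs over the neighbors of $E_k$ in $\Gamma$. This relation lets me solve for any one chosen neighbor $e_j$ of $e_k$ as an integer combination of $e_k$ and the remaining neighbors of $e_k$.

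The elimination will be organized by a tree traversal. First I would root $\Gamma$ at an arbitrary interior vertex $v_0$. For each of the $s$ interior vertices $v$, I would designate exactly one of its children in the rooted tree as the \emph{eliminated} vertex associated with the relation based at $v$. Since every non-root vertex has a unique parent, no vertex gets designated twice; and since an interior vertex has degree at least two, the root has at least one child to designate and every other interior vertex still has a free child to designate (its parent edge never goes to a child). Thus we pick out exactly $s$ distinct \emph{eliminated} vertices, and the complementary set of $m-s = t$ vertices is the candidate generating set.

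Next I would show by induction on $d(v,v_0)$ that every eliminated vertex $v$ lies in the subgroup generated by the $t$ non-eliminated vertices. The relation based at the parent $p(v)$ reads
\[
 d_{p(v)}\, e_{p(v)} \;=\; e_{p(p(v))} \;+\; \sum_{\substack{w\ \text{child of}\ p(v)}} e_w ,
\]
with the grandparent term absent when $p(v) = v_0$. Solving for $e_v$ expresses it as an integer combination of (i) $e_{p(v)}$ and (if present) $e_{p(p(v))}$, which are strictly closer to $v_0$ than $v$ and hence are handled by the inductive hypothesis, and (ii) the siblings of $v$, which are all non-eliminated by our designation rule (at $p(v)$, only one child --- namely $v$ --- was designated). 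The base case $d(v,v_0)=1$ uses the relation at $v_0$ itself and writes $e_v$ directly in terms of $e_{v_0}$ and its other (non-eliminated) children, all of which are among our $t$ generators.

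The main point to double-check --- really the only thing beyond bookkeeping --- is the combinatorial consistency of the designation: one needs that the designation rule can indeed be carried out at every interior vertex without conflict, and that the induction above never requires an eliminated vertex farther from $v_0$ than $v$. Both reduce to the tree structure of $\Gamma$: parents are unique, so each vertex is designated at most once; and $p(v)$, $p(p(v))$, and the siblings of $v$ are all at distance at most $d(v,v_0)$ from $v_0$, with equality only for the siblings (which are non-eliminated anyway). This completes the argument that the $t$ non-eliminated vertices generate $D(\Gamma^*)$.
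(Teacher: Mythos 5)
Your argument is correct and is essentially the paper's own proof: rooting the tree at an interior vertex and designating one child per interior vertex for elimination is exactly the paper's ``red arrow'' scheme, and your induction on distance to the root (using that siblings are never eliminated, while parent and grandparent are strictly closer) matches the paper's induction step verbatim. No substantive differences to report.
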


\bigskip      
       
\section{{orbifold splice quotients}}
  
  We briefly outline the basics of splice quotient singularities; see Section $1$ of \cite{nw3} for a few more details, and \cite{nw1} for the full story.  
   
       We retain the same notation as before for a good resolution of a singularity, including the graph $\Gamma$ and the diagonal representation $D(\Gamma)\hookrightarrow( \C^*)^t$.  $\Gamma$ has leaves (vertices of valence 1) and nodes (vertices of valence $\geq 3$).  Associated to $\Gamma$ is a \emph{splice diagram} $\Delta$, obtained by collapsing all vertices of valence two, and assigning a positive integer weight to every node and emanating edge by taking an appropriate determinant.  From this data, for each node one can assign a weight to every leaf of $\Delta$; the \emph{semigroup condition} on $\Delta$ then requires that each weight on an emanating edge of that node is in the semigroup generated by the weights of the ``outer'' leaves.   Choosing a coordinate $x_i$ for each of the $t$ leaves, one can then write a total of $t-2$ \emph{splice diagram equations} in the $x_i$'s, by writing for each node a sum of monomials (one for each emanating edge) which have the same weight as the product of weights around the node (one uses the coefficients of the semigroup relations).  The \emph{congruence condition} allows one to insure that each equation so obtained transforms by a character under the action of the representation of $D(\Gamma)$ on $\C^t$.   One thus produces from $\Gamma$ isolated complete intersection singularities in $\C^t$ on which $D(\Gamma)$ acts, freely off the origin.  There are choices involved, and higher order terms can be added. 
       
         $\Gamma$ is called \emph{quasi-minimal} if any string in the graph contains no $-1$ vertex, or consists of a unique $-1$ vertex.  (A \emph{string} is a connected subgraph containing no nodes.) The Main Theorem $7.2$ of \cite{nw1} starts with a graph $\Gamma$:
       
       \begin{theorem}\cite{nw1}  Suppose $\Gamma$ is quasi-minimal and satisfies the semigroup and congruence conditions.  Then:
       \begin{enumerate}
       \item A set of splice diagram equations $\{f_j(x_i)=0\}$ for $\Gamma$ defines an isolated complete intersection singularity $(X',0)\subset (\C^t,0)$.
       \item The discriminant group $D(\Gamma)$ acts freely on $X'-\{0\}$.
       \item The quotient $(X,0)\equiv (X',0)/D(\Gamma)$ has an isolated normal surface singularity, and a good resolution $(\tilde{X},E)\rightarrow (X,0)$ whose associated dual graph is $\Gamma$.
       \item $f:(X',0)\rightarrow (X,0)$ is the universal abelian cover, in particular unramified off the origins.
       \item Each curve $C_i'=\{x_i=0\}\cap X'$ is mapped by $(X',0)\rightarrow (X,0)$ to an irreducible curve $C_i$, whose proper transform $\tilde{C_i}$ on $\tilde{X}$ is smooth and intersects the exceptional curve transversally, along an end $E_i$.  
       \end{enumerate}
       \end{theorem}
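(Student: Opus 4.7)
The plan is to verify the five assertions in order, with the heavy combinatorial lifting concentrated in (1) and (3). First, for assertion (1), I would check that the splice diagram equations cut out a complete intersection of the expected dimension. There are $t-2$ equations in $t$ variables, so once a Jacobian computation shows maximal rank away from the origin, dimension two and isolated singularity follow simultaneously. The semigroup condition is exactly what guarantees that each equation can be built from monomials whose multi-weights match the prescribed weight at a node, so the equations are nontrivial and have the right leading behavior. For the Jacobian rank, I would stratify $X' \setminus \{0\}$ according to which leaf variables vanish and, at each stratum, use the splice-diagram weighting along a path connecting non-vanishing leaves to exhibit an explicit non-vanishing minor of the Jacobian (essentially propagating nonvanishing information from a non-zero leaf coordinate through the successive node-equations).

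For assertion (2), I would use the diagonal representation $D(\Gamma)\hookrightarrow(\C^*)^t$ from Proposition 5.1 together with the congruence condition. The congruence condition is precisely what forces each splice equation to transform by a single character of $D(\Gamma)$, so the action is well-defined on $X'$. Freeness away from the origin reduces to showing that no nontrivial character can fix every non-vanishing coordinate of a point on $X'\setminus\{0\}$; this again uses the stratification by which leaves vanish, combined with the observation that at least one ``outer leaf'' coordinate must be non-zero, which by injectivity of the representation is enough to kill stabilizers.

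For assertions (3) and (5), the approach is to construct a resolution of $X \equiv X'/D(\Gamma)$ combinatorially and show its dual graph is $\Gamma$. One natural route is to exhibit, node by node, a sequence of partial toric-style blow-ups on $X'$ making the divisors $\{x_i=0\}$ into a normal-crossings configuration, take the quotient by $D(\Gamma)$, and verify that the images of the exceptional curves realize the non-leaf vertices of $\Gamma$ while the strict transforms of $C_i=\{x_i=0\}/D(\Gamma)$ are smooth and attach transversally to the leaves $E_i$. The self-intersection numbers and genera must then be checked to match $\Gamma$; the identification of discriminants forces the combinatorics to fit, but one still needs to verify transversality and smoothness of each $\tilde{C}_i$ by a local calculation at the attaching point. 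Assertion (4) is then immediate: the cover $X' \to X$ is unramified off the origin by (2), abelian by construction, and of degree $|D(\Gamma)| = |H_1(\Sigma;\Z)|$, so it must be the UAC.

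The main obstacle is assertion (1) together with the graph-identification half of (3). Extracting an isolated singularity statement from the bare splice equations is delicate because the semigroup condition is only a weight-matching statement, not a transversality statement, so one really has to work node by node to produce a non-degenerate Jacobian minor. Likewise, showing that the resolution graph is exactly $\Gamma$ rather than some blow-up or blow-down of it requires simultaneously controlling self-intersections and the ``quasi-minimal'' hypothesis at every string---this is where the bulk of the argument in \cite{nw1} lives, and I would expect any independent proof to track the same combinatorial data.
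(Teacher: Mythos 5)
This theorem is not proved in the paper at all: it is quoted verbatim as the Main Theorem 7.2 of \cite{nw1}, and the surrounding text of Section 6 only recalls the definitions (splice diagram, semigroup condition, congruence condition, quasi-minimality) needed to state it. So there is no in-paper argument to measure your proposal against; the comparison has to be with the original proof in \cite{nw1}.

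Measured against that proof, your outline correctly locates the pressure points but defers exactly the steps that constitute the theorem, so as it stands it is a plan rather than a proof. For (1), ``exhibit a non-vanishing Jacobian minor by propagating nonvanishing information through the node equations'' is the entire difficulty; in \cite{nw1} this is done by an induction on the splice diagram that reduces, node by node, to Brieskorn-type complete intersections, with a careful analysis of the leading-weight forms of the equations --- the semigroup condition is used not merely to make the monomials exist but to control which coordinate strata can meet $X'$, and none of that bookkeeping appears in your sketch. For (2), freeness on strata where several leaf coordinates vanish is not an ``observation''; it requires knowing which strata actually meet $X'\setminus\{0\}$, which again comes out of the inductive analysis. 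For (3) and (5) your route (equivariant blow-ups, then compute self-intersections on the quotient) is genuinely different from \cite{nw1}, which argues topologically: the link of $X'$ is identified with the universal abelian cover of the graph manifold $\Sigma(\Gamma)$ by splicing together Seifert-fibered pieces, one per node, the covering group is matched with $D(\Gamma)$, and the resolution graph is then read off from the plumbing description rather than verified curve by curve. Your blow-up route could in principle work, but the claim that ``the identification of discriminants forces the combinatorics to fit'' is not a substitute for actually producing the resolution and checking the graph, and that is where the bulk of the work lives. Part (4) is fine once (2) and (3) are established.
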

       Such $(X,0)$ are called \emph{splice quotient singularities}.
       
        In more detail, the curve  $C_i'$ is reduced, with $|D(\Gamma)|/h$ irreducible components, where $h$ is the order of $e_i$ in $D(\Gamma)$ (follows from \cite{nw1}, Section 3).  $x_i^h$ is $D(\Gamma)$ invariant, hence a function on $(X,0)$; its zero-set is a Cartier divisor, which is $h$ times an irreducible Weil divisor $C_i$.     $x_i^h$ is an \emph{end-curve function} for $(X,0)$; on the resolution $(\tilde{X},E)$ it vanishes only on $E$ and an \emph {end-curve $\tilde{C_i}$}, which intersects $E_i$ transversally at one point.  Thus the above construction gives not only a splice-quotient singularity $(X,0)$, but also a collection of end-curve functions and their corresponding irreducible curves $C_i$.  Note finally that $f^*(C_i)$ is the reduced sum of the irreducible components of $C'_i$.
                        
        Whether a given $(X,0)$ is a splice quotient singularity is given by the principal result of \cite{nw3},  the \emph{End-Curve Theorem}:
        
        \begin{theorem} \cite{nw3} Let $(X,0)$ be a normal surface singularity with $\Q$HS link, $(\tilde{X},E)\rightarrow (X,0)$ a good resolution.  Suppose for every end $E_i$ of the exceptional curve $E$ there is a function on $(X,0)$ whose zero set on $\tilde{X}$ is an end-curve for $E_i$.  Then $(X,0)$ is a splice quotient singularity.
        \end{theorem}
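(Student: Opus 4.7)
The plan is to use the given end-curve functions on $X$ to produce coordinates $x_i$ on the UAC $(X',0)$ which satisfy splice diagram equations, thereby recovering $(X,0)$ as a splice quotient. For each end $E_i$ let $z_i$ be the given end-curve function and let $\pi:(X',0)\to(X,0)$ denote the UAC; set $h_i$ equal to the order of $e_i$ in $D(\Gamma)$. The first step is to show that $\pi^*z_i$ admits an $h_i$-th root $x_i \in \mathcal{O}_{X',0}$. Since $h_i[C_i]=0$ in $\mathrm{Cl}(X)$ (equivalently $\pi^*[C_i]=0$ in $\mathrm{Cl}(X')$) and $\pi$ is unramified off the origin, the pullback divisor $\pi^{*}(z_i)$ on a suitable lift of the resolution is divisible by $h_i$; the associated rational function $x_i$ is then regular because the quotient divisor is effective. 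Each $x_i$ transforms by some character $\chi_i$ of $D(\Gamma)$, because $x_i^{h_i}$ is invariant.

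Next I would analyze the divisor $(x_i)$ on a good resolution $(\tilde{X}',E')\to(X',0)$. Using that this resolution corresponds to the ``unwinding'' of $\Gamma$ under the abelian cover (so its plumbing data is computable from $\Gamma$ and the splice diagram weights), one finds $(x_i) = \tilde{C}'_i + \sum_j \ell_{ji} E'_j$ with all $\ell_{ji}$ non-negative integers and $\tilde{C}'_i$ a smooth transversal to a prescribed end of $E'$. For each node $v$ of the splice diagram and each edge $e$ emanating from $v$, form the monomial $M_e = \prod_i x_i^{n_{vi}^{(e)}}$ prescribed by the splice-diagram construction. The key computation is that the semigroup condition guarantees, for the edges at $v$, that all of the $M_e$ have matching divisors along the component $E_v$ corresponding to $v$; they therefore span a two-dimensional vector space of functions with that divisor, forcing a nontrivial linear relation $\sum_e c_e M_e = 0$, which is the splice equation at $v$. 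The congruence condition appears automatically: for the relation to hold each $M_e$ must transform by the same character of $D(\Gamma)$, which is exactly the congruence condition on $\Gamma$.

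Finally, assemble the $x_i$ into a map $\Phi = (x_1,\ldots,x_t):(X',0)\to(\C^t,0)$ and show that $X'$ is cut out by the resulting splice equations, so that $X$ is the splice quotient $X'/D(\Gamma)$. The main obstacle will be this last step: the splice equations visibly vanish on $\Phi(X')$, but one must prove that they generate the full ideal, i.e., that the complete intersection they define has no extra components beyond $\Phi(X')$. My approach is a dimension and multiplicity comparison via Theorem 6.1: the splice equations cut out an isolated complete intersection of the expected dimension with a free $D(\Gamma)$-action off the origin, and a $D(\Gamma)$-equivariant length or geometric-genus calculation, comparing the invariant subring to $\mathcal{O}_{X,0}$, forces equality. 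A subsidiary challenge is verifying that the semigroup and congruence conditions hold at every node; as indicated above, these should emerge from the local divisor analysis on $\tilde{X}'$ rather than being assumed a priori.
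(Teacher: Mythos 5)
First, note that the paper you are working from does not prove this statement at all: it is quoted from \cite{nw3} (the End Curve Theorem), so there is no internal proof to compare against. Your outline does reconstruct the broad strategy of the original argument: pull the end-curve functions back to the universal abelian cover, extract $h_i$-th roots $x_i$ (which exist because $h_i[C_i]=0$ in $\mathrm{Cl}(X)$, so the pullback divisor becomes divisible by $h_i$ on $X'$), read off the semigroup and congruence conditions from the divisors of the $x_i$ on a resolution of $X'$, and then show that $(x_1,\dots,x_t)$ identifies $X'$ with a splice-type complete intersection.

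However, there is a genuine gap at the central step. Two monomials $M_e$, $M_{e'}$ whose divisors agree along $E_v$ (more precisely, whose $E_v$-valuations agree) do not thereby span a two-dimensional space of functions with a common divisor, and nothing forces an exact linear relation $\sum_e c_e M_e=0$. What one actually obtains is that a suitable linear combination $\sum_e c_e M_e$ has strictly larger order than each $M_e$ with respect to the filtration defined by the $E_v$-valuation; the splice equations satisfied by the $x_i$ therefore carry unavoidable higher-order correction terms (consistent with the remark in Section 6 that ``higher order terms can be added''). Closing this gap --- showing that these perturbed equations still cut out an isolated complete intersection and that the induced map from $X'$ to it is an isomorphism --- is precisely the technical heart of \cite{nw3}, carried out there through an analysis of the associated graded ring of the weight filtration; Okuma's alternative proof \cite{okuma5} instead compares geometric genera, which is closer to the ``length or genus calculation'' you propose as a fallback. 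As written, your argument asserts the conclusion of that analysis rather than supplying it, so the proposal is an accurate roadmap but not yet a proof.
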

        
        An immediate Corollary is the result of T. Okuma:
        
        \begin{theorem}\cite{okuma2}  Let $(\tilde{X},E)\rightarrow (X,0)$ be a quasi-minimal resolution of a rational surface singularity.  Then the graph $\Gamma$ satisfies the semigroup and congruence conditions.  Moreover, every end-curve $\tilde{C_i}$ is cut out by an end-curve function on $X$.
        \begin{proof}   For any end-curve $\tilde{C_i}$ on $\tilde{X}$, the image $C_i$ on $X$  is a $\Q$-Cartier divisor (since $X$ is rational), so some multiple of it is the zero-set of a function on $X$.
        \end{proof}
        \end{theorem}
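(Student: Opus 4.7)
The plan is to reduce the statement entirely to the End-Curve Theorem 6.2. The key leverage is that rationality of $(X,0)$ forces every Weil divisor to be $\Q$-Cartier, which is exactly the input needed to produce end-curve functions at every end of $\Gamma$.

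First, I would use $p_g(X)=0$ (the defining vanishing for a rational singularity) together with Mumford's decomposition $\mathrm{Cl}(X,0)\cong H^1(\tilde{X},\mathcal O_{\tilde X})\oplus\E^*/\E$ recalled in the proof of Theorem 3.1. The vector space summand disappears, so $\mathrm{Cl}(X,0)$ collapses to the finite discriminant group $\E^*/\E$. Consequently every irreducible Weil divisor on $(X,0)$ has some positive integer multiple that is principal.

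Next, for each end $E_i$ of $\Gamma$, pick a germ of a smooth curve $\tilde{C}_i\subset\tilde{X}$ meeting $E_i$ transversally at a generic point of $E_i$, and let $C_i$ be its image on $X$, an irreducible Weil divisor. By the previous paragraph there exists $n_i>0$ and a function $g_i\in\mathcal O_{X,0}$ with divisor $n_iC_i$. Pulled back to $\tilde{X}$, the divisor of $g_i$ is a nonnegative combination of exceptional curves plus $n_i\tilde{C}_i$; hence its only non-exceptional component is a multiple of the smooth transversal $\tilde{C}_i$. This is exactly an end-curve function for $E_i$ cutting out $\tilde{C}_i$ (with multiplicity $n_i$), per the description in Theorem 6.1(5).

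Finally, having produced end-curve functions at every end of $\Gamma$, I would invoke the End-Curve Theorem 6.2 to conclude that $(X,0)$ is a splice quotient, which by Theorem 6.1 is precisely the assertion that $\Gamma$ satisfies the semigroup and congruence conditions. The second clause of the statement (that every end-curve $\tilde{C}_i$ is so cut out) is now also established, because $\tilde{C}_i$ was arbitrary. The main obstacle — constructing splice diagram equations and verifying they define an ICIS with the right quotient — is entirely absorbed by Theorem 6.2, so the only real content one must supply is the $\Q$-Cartier observation above. That is precisely why Okuma's theorem is such a clean corollary of the End-Curve Theorem.
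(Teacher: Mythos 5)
Your proposal is correct and follows exactly the paper's route: rationality kills the $H^1(\tilde X,\mathcal O_{\tilde X})$ summand in Mumford's decomposition of $\mathrm{Cl}(X,0)$, so every end-curve image $C_i$ is $\Q$-Cartier, a multiple of it is cut out by a function which is then an end-curve function, and the End-Curve Theorem 6.2 finishes the argument. You have merely written out the steps that the paper's one-line proof leaves implicit.
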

 
 To analyze a pair $(X,C)$, one needs a well-adapted log resolution.
   \begin{proposition}  A pair $(X,C=\sum n_iC_i)$ has a  
       \emph{minimal orbifold resolution}, a smallest log resolution $\pi:(\tilde{X_C},E)\rightarrow (X,0)$ satisfying
       \begin{enumerate}
       \item  each $C_i$ has  proper transform $\tilde{C_i}$ which is smooth, intersecting $E$ transversally
       \item $\tilde{C_i}$ intersects a leaf $E_i$ of $E$
       \item each leaf $E_j$ of $E$ intersects at most one $\tilde{C_i}$
       \item $\tilde{X_C}$ is quasi-minimal.
       \end{enumerate}       
       $\tilde{X_C}$ is unique except when a log resolution consists of a single curve plus two $\tilde{C}_i$ intersecting it transversally at two points.
        \begin {proof} Starting with the minimal good resolution of $X$, resolve in a minimal way the singularities of the reduced inverse image of each $C_i$ until one has strong normal crossings (the minimal log-resolution).  If the proper transform of some $C_i$ does not intersect a leaf, blow-up that intersection point.   If three or more proper transforms intersect the same leaf, blow-up each of the intersection points.  If two intersect the same leaf, which has another connection with the graph, again blow-up those points.  The only missing case is when two transforms intersect a graph consisting of a single curve; in that case, one could blow-up either of the intersection points.  Otherwise, one has uniqueness of the resolution.

          We claim the corresponding resolution is quasi-minimal.  If a $-1$ curve intersects some $\tilde{C_i}$, then it is an end-curve; if its neighboring curve had valency $2$, the $-1$ curve could be blown down, contradicting the minimality of the blow-up process.  If a $-1$ curve intersects no $\tilde{C_i}$ and it has a neighbor curve with valency $2$, again one could contract the $-1$ curve, violating the minimality process.
        \end{proof}
        \end{proposition}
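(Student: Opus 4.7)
My plan is to construct $\tilde{X}_C$ by a canonical sequence of blow-ups beginning from the minimal good resolution of $(X,0)$ and to show at the end that the result is forced by the four conditions. First I would pass to the minimal embedded (log) resolution of the pair: starting from the minimal good resolution of $X$, blow up points of the reduced total transform of $C$ until each proper transform $\tilde{C}_i$ is smooth and the whole configuration has simple normal crossings. This immediately yields condition (1) but may fail (2), (3), and (4).

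Next I would enforce conditions (2) and (3) by additional point blow-ups. If some $\tilde{C}_i$ meets an exceptional curve $E_j$ that is \emph{not} a leaf of the current graph, I blow up the point $\tilde{C}_i\cap E_j$; the new $(-1)$-curve attaches as a leaf and is met transversally by the new proper transform of $\tilde{C}_i$. If two or more proper transforms meet the same leaf, I blow up the offending intersection points to push each $\tilde{C}_i$ onto its own freshly created $(-1)$-leaf. Iterating until stable, and noting that the total transform remains normal-crossing at each step, produces a log resolution satisfying (1)--(3).

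The step I expect to be the main obstacle is (4), quasi-minimality, which requires showing that no interior $(-1)$-curve survives in a string. A $(-1)$-curve carrying some $\tilde{C}_i$ is a leaf by construction, so is fine. A $(-1)$-curve carrying no $\tilde{C}_i$ in a string of length $\geq 2$ would have to either already be present in the minimal good resolution of $X$ (impossible, by minimality) or have been introduced by one of my blow-ups; but each blow-up I perform is justified by an explicit failure of (1), (2), or (3), and I must check case by case that undoing it would reintroduce that failure. The delicate case is the last: a $(-1)$-curve introduced to separate two transforms meeting a leaf, whose contraction would violate (3), together with verifying that after contraction the neighboring curve does not itself acquire valence $\leq 2$ in a way compatible with the conditions.

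Finally, for uniqueness, the key observation is that every blow-up in my procedure is forced by a specific violation of (1)--(3) that admits a unique remedy, \emph{except} in one symmetric situation: when a log resolution has a single exceptional curve hit transversally by exactly two $\tilde{C}_i$'s at two distinct points, condition (3) requires blowing up one of the two intersections, and either choice produces a valid $\tilde{X}_C$. This is precisely the exceptional case in the statement, and outside it the step-by-step canonicity of the construction delivers uniqueness.
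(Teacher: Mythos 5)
Your proposal follows essentially the same route as the paper: start from the minimal good/log resolution, perform the forced blow-ups to put each $\tilde{C}_i$ on its own leaf, observe that the only non-canonical choice occurs when two transforms meet a single exceptional curve, and derive quasi-minimality from the fact that contracting a surviving $(-1)$-curve in a string would contradict the minimality of the blow-up process. The case check you defer for condition (4) is exactly the two-case argument the paper gives (a $(-1)$-curve meeting some $\tilde{C}_i$ is a leaf, and in either case a valence-$2$ neighbor would permit a contraction violating minimality), so your outline is correct and complete in spirit.
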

  We exclude the simple cases for which one has non-uniqueness of the resolution.   
     
Thus, a pair gives rise to a quasi-minimal graph $\Gamma_C$, as well as a decorated version $\Gamma_C^*$ which has arrows and weights at special edges.   From this data, we can assign weight one to non-special edges, and encode all the weights in a power map  $N:\C^t\rightarrow \C^t$ given on functions by $x_i\mapsto z_i^{n_i}$.   By $(5.3)$, one has  $D(\Gamma^*)=N^{-1}(D(\Gamma)).$

We give conditions on graphs $\Gamma$ and $\Gamma^*$ that allow one to give equations for a pair $(X,C)$ and its UALC.  This is Theorem 2 of the introduction, which we now prove.

Let $\Gamma$ be a quasi-minimal graph with $t$ ends satisfying the semigroup and congruence conditions, and $\Gamma^*$ a decorated version with arrows and weights at special ends.  Assign a coordinate $x_i$ to each end of $\Gamma$.  Choose a complete intersection singularity $(X',0)\subset (\C^t,0)$ defined by $t-2$ splice equations $\{f_k(x_1,x_2,\cdots,x_t)=0\}$ on which $D(\Gamma)$ acts, freely off $0$.  Then $(X',0)\rightarrow (X',0)/D(\Gamma)\equiv (X,0)$ is the UAC, and there is a resolution $(\tilde{X},E)\rightarrow(X,0)$ with graph $\Gamma$.  If $E_i$ is a special end of $E$,  the curve $C'_i=\{x_i=0\}\cap X'$ is reduced and maps to a $\Q$-Gorenstein curve $C_i$ on $(X,0)$. Let $C'=\Sigma C_i'$.  Consider as above a power map $N:\C^t\rightarrow \C^t$. 

\begin{theorem} Consider $\Gamma$, $\Gamma^*$, $(X',0)$,  $f_k(x_1,\cdots,x_t)$, $(X,0)$, $C_i$, $C'$, $N:\C^t\rightarrow \C^t$,  as above.
\begin{enumerate}
\item Define $(X'',0)= N^{-1}(X',0)=\{f_k(z_1^{n_1},\cdots,z_t^{n_t})=0,\  1\leq k \leq t-2\}\subset \C^t$. Then $(X'',0)$ is an isolated complete intersection singularity on which $D(\Gamma^*)$ acts.
\item Let $C''\subset X''$ be the reduced Cartier divisor which is the sum of $\{z_i=0\}\cap X''$ for special $i$.  Then $D(\Gamma^*)$ acts on $C''$ and acts freely on $X''-C''$.
\item The quotient $(X'',C'')\rightarrow (X'',C'')/D(\Gamma^*)= (X,C)$ is the universal abelian log cover, where $C=\Sigma n_iC_i$ is the sum (with the weights) corresponding to the special ends of $\Gamma^*$.
\end{enumerate}
\begin{proof}  We show first that $(X'',0)$ is non-singular away from the origin.  $N$ is a covering map off the intersection with the coordinate hyperplanes $z_i=0$.  $N$ can be factored as the composition of maps which raise powers one coordinate at a time.  To study what happens over the reduced curve $\{x_i=0\}\cap X'$, factor the map $N$ by first raising the $i^{th}$ coordinate to the $n_i^{th}$ power, and then raising all the other coordinates to the appropriate power.  The inverse image of  $\{x_i=0\}\cap X'$ under the first map is smooth (away from $0$), and the second map is unramified over the new curve.  Further,  $\{z_i=0\}\cap X''$ is reduced.

Note $D(\Gamma^*)$ is a subgroup of $(\C^*)^t$, which itself preserves all  $\{z_j=0\}$, so it acts on $X''$ as well as $C''$.  By construction, $N:(X'',0)\rightarrow (X',0)$ is a covering map off $C''$.  Since $D(\Gamma)$ acts freely on $X'-\{0\}$, then $N^{-1}(D(\Gamma))=D(\Gamma^*)$ acts freely on $X''-C''$.  The full (and free) quotient of $X''-C''$ by $D(\Gamma^*)$ is thus $X-\Sigma C_i$.  The construction above matches that of the UALC as described in Section 3.
\end{proof}
\end{theorem}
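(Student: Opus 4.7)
The plan is to establish the three conclusions in order, drawing on Theorem 6.1 (properties of the underlying splice quotient $(X',0)$ and its end curves $C_i'=\{x_i=0\}\cap X'$) and on Proposition 5.3 (which identifies $D(\Gamma^*)$ as $N^{-1}(D(\Gamma))$ inside $(\C^*)^t$). Since $(X'',0)$ is cut out by the $t-2$ equations $f_k(z_1^{n_1},\dots,z_t^{n_t})=0$, it is automatically a complete intersection of the expected codimension; away from the coordinate hyperplanes $N$ is \'etale, so smoothness of $(X',0)\setminus\{0\}$ transports to $(X'',0)$ there. The essential analytic point is smoothness of $X''$ at generic points of each $\{z_i=0\}\cap X''$ with $n_i>1$. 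For this I would factor $N$ as a sequence of one-coordinate power maps and handle the $i$-th coordinate first: the substitution $x_i\mapsto z_i^{n_i}$ builds the $n_i$-fold cyclic cover of $X'$ branched along $C_i'$, which is smooth because Theorem 6.1(5) guarantees $C_i'$ is smooth at a generic point and transverse to the exceptional curve. The remaining coordinate substitutions are \'etale over this locus and so preserve smoothness.

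Part (2) is then mostly group-theoretic. Every element of $D(\Gamma^*)\subset(\C^*)^t$ acts diagonally, hence preserves every coordinate divisor, in particular preserves $X''$ and $C''$. For freeness off $C''$, suppose $g\in D(\Gamma^*)$ fixes some $z\in X''\setminus C''$. Then $N(g)\in D(\Gamma)$ fixes $N(z)\in X'$. Because $D(\Gamma)$ acts freely on $X'\setminus\{0\}$ by Theorem 6.1(2), either $N(z)=0$ (which forces $z_i=0$ for every $i$ and hence $z\in C''$) or $N(g)=1$, i.e.\ $g\in\ker\Phi$. By Proposition 5.3 the kernel $\ker\Phi$ is $\bigoplus\Z/(n_i)$ acting as $n_i$-th roots of unity in the special coordinates and trivially elsewhere; such an element can fix $z$ only by forcing $z_i=0$ in each special coordinate where it acts nontrivially, and outside $C''$ this forces $g=1$.

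For part (3), I would factor the quotient $X''\to X$ through the intermediate quotient $X''/\ker\Phi$: the kernel acts with the $i$-th $\Z/(n_i)$-factor multiplying $z_i$ by roots of unity, so the quotient by $\ker\Phi$ is precisely the iterated cyclic-cover construction recovering $x_i=z_i^{n_i}$ on each special coordinate, branched to order $n_i$ along $C_i'$. This matches the recipe for the UALC given in the proof of Theorem 3.1 and in Corollary 3.2 (take the UAC of $X$, then adjoin $n_i$-th roots of the functions cutting out each $g^*C_i$). Quotienting further by $D(\Gamma^*)/\ker\Phi\cong D(\Gamma)$ then recovers the UAC $X'\to X$ and pushes each $C_i'$ down to $C_i$, so the composite $(X'',C'')\to(X,C)$ has branching exactly $C=\sum n_iC_i$. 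The covering group of the composite map is $D(\Gamma^*)$, which by Theorem 4.3 equals $H_1^{orb}(\Sigma)$, and this coincides with the covering group of the UALC identified in Corollary 3.2, so by uniqueness of the UALC the map is the UALC.

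The main obstacle I anticipate is part (1): smoothness of $(X'',0)\setminus\{0\}$ along each component of $C''$ is not automatic from the isolated singularity of $(X',0)$, and it is precisely here that one must invoke the end-curve statement of Theorem 6.1(5) asserting that $C_i'$ is smooth and transverse to the exceptional divisor. Everything else is a formal consequence of the commutative diagram in Proposition 5.3 together with the Section 3 construction of the UALC.
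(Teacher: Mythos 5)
Your proposal is correct and follows essentially the same route as the paper's proof: you factor $N$ into one-coordinate power maps to get smoothness of $X''-\{0\}$ along the branch curves, use $D(\Gamma^*)=N^{-1}(D(\Gamma))$ together with freeness of $D(\Gamma)$ on $X'-\{0\}$ for part (2), and match the tower $X''\to X'\to X$ against the Section 3 construction of the UALC for part (3). The only differences are expository: you argue freeness element-by-element where the paper invokes the covering-map property of $N$ off $C''$, and you spell out the comparison with Corollary 3.2 that the paper leaves implicit.
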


\begin{definition} A pair which arises as in the Theorem is called an \emph{orbifold splice quotient}.
\end{definition}
\begin{remark}  Note that the condition for $(X,\Sigma n_iC_i)$ to be a splice quotient is independent of the weights $n_i$, and depends only on the $C_i$ (which must be $\Q$-Cartier).
\end{remark}

There is an analogue of the End-Curve Theorem for orbifold splice quotients.

\begin{theorem} Let $(X,C=\Sigma n_iC_i)$ be a pair, with minimal orbifold resolution and data $(\tilde{X},E)$, $\Gamma$, $\Gamma^*$.  Then the following are equivalent:
\begin{enumerate}
\item $(X,C)$ is an orbifold splice quotient
\item Each $C_i$ is $\Q$-Cartier, and for every non-special end $E_j$ of $E$ there is an end-curve function.
\end{enumerate}
\begin{proof}  One implication follows from the above discussion; we prove the converse.  

Since $C_i$ is $\Q$-Cartier, it is the zero-locus of a function $g_i$ on $X$.  Therefore, on $\tilde{X}$, $g_i$ vanishes only on $E$ and $\tilde{C_i}$, hence is an end-curve function for that special leaf.  But by assumption, for each non-special leaf there is an end-curve function.  Thus, there are end-curve functions for every end of $\Gamma$.  By the End-Curve Theorem, $\Gamma$ satisfies the semigroup and congruence conditions and $(X,0)$ itself is a splice quotient.  It follows from the construction above that $(X,C)$ is an orbifold splice quotient.
\end{proof}
\end{theorem}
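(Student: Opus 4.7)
The plan is to prove the two implications separately, leveraging the construction of Theorem 6.5 in one direction and the original End-Curve Theorem 6.2 in the other.

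For $(1)\Rightarrow(2)$, I would assume $(X,C)$ arises as the quotient in Theorem 6.5 from equations $f_k(z_1^{n_1},\dots,z_t^{n_t})=0$ defining $(X'',0)$, with $D(\Gamma^*)$ acting diagonally and $C''=\sum_{i\text{ special}}\{z_i=0\}$. For each special end $E_i$, the divisor $\{z_i=0\}\cap X''$ is Cartier on $X''$; since $D(\Gamma^*)\subset (\C^*)^t$ is finite, some power $z_i^{N_i}$ is $D(\Gamma^*)$-invariant and descends to a function $g_i$ on $X$ whose divisor is a positive integer multiple of $C_i$, so $C_i$ is $\Q$-Cartier. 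For a non-special end $E_j$, the weight is $n_j=1$, so $z_j=x_j$ coincides with the underlying splice coordinate for the splice quotient $(X,0)=(X',0)/D(\Gamma)$; by the final assertion of Theorem 6.1, an appropriate invariant power of $x_j$ provides an end-curve function at $E_j$.

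For $(2)\Rightarrow(1)$, the key observation is that the $\Q$-Cartier hypothesis on $C_i$ supplies the missing end-curve functions at the special ends. Specifically, if $C_i$ is $\Q$-Cartier, some $m_iC_i$ is the zero-locus of a function $g_i$ on $X$; since the minimal orbifold resolution has $\tilde C_i$ transversal to the leaf $E_i$ and $g_i$ vanishes only on $E\cup\tilde C_i$, the function $g_i$ is an end-curve function for $E_i$. Combined with the assumed end-curve functions at the non-special ends, we obtain end-curve functions at \emph{every} end of $\Gamma$. The End-Curve Theorem 6.2 then guarantees that $\Gamma$ satisfies the semigroup and congruence conditions and that $(X,0)$ is a splice quotient with UAC $(X',0)$ given by splice equations $\{f_k(x_i)=0\}$. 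I then apply the construction of Theorem 6.5 to $(\Gamma,\Gamma^*)$: form $(X'',0)=\{f_k(z_i^{n_i})=0\}$, let $C''=\sum_{i\text{ special}}\{z_i=0\}\cap X''$, and invoke Theorem 6.5 to conclude that $(X'',C'')/D(\Gamma^*)$ is an orbifold splice quotient whose underlying pair is $(X,\sum n_iC_i)$.

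The main obstacle in the second direction is the matching step: verifying that the pair produced by the construction of Theorem 6.5 coincides with the given pair $(X,C)$, rather than producing some other Weil divisor with the correct topological end behavior. Concretely, one must check that the irreducible curve cut out on $X$ by the invariant power of $z_i$ agrees with the given $C_i$. This will follow from the fact that both are irreducible reduced Weil divisors on $(X,0)$ whose proper transform on the minimal orbifold resolution is the unique end-curve through the leaf $E_i$ transversally, together with the observation from Section 6 (before Theorem 6.5) that an irreducible curve through $E_i$ is determined by its intersection data with $E$. Once this identification is in place, the multiplicities $n_i$ assigned to $C_i$ in the orbifold splice quotient construction coincide with those recorded in the decoration $\Gamma^*$, and we conclude that $(X,C)$ is an orbifold splice quotient.
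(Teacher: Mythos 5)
Your proof follows the paper's argument in both directions. For $(2)\Rightarrow(1)$ you use the $\Q$-Cartier hypothesis to manufacture end-curve functions at the special leaves, combine these with the assumed end-curve functions at the non-special leaves, invoke the End-Curve Theorem 6.2 to conclude that $\Gamma$ satisfies the semigroup and congruence conditions and that $(X,0)$ is a splice quotient, and then feed $(\Gamma,\Gamma^*)$ into the construction of Theorem 6.5; the direction $(1)\Rightarrow(2)$ is read off from that construction, exactly as the paper does with ``one implication follows from the above discussion.''

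The one place you go beyond the paper is the ``matching step,'' and there your justification does not work. You are right that something must be checked --- the definition of orbifold splice quotient requires that the specific pair $(X,\sum n_iC_i)$, with the given curves $C_i$, arise from Theorem 6.5, and the paper's ``it follows from the construction above'' silently assumes this. But the fact you appeal to, that an irreducible curve through $E_i$ is determined by its intersection data with $E$, is not stated in Section 6 and is false: already for $\C^2$ blown up once at the origin, every line through the origin has proper transform meeting the exceptional curve transversally at a single point, yet distinct lines are distinct divisors. The correct way to close the gap is to recall how the End-Curve Theorem of \cite{nw3} is proved: the splice-quotient presentation it produces takes the given end-curve functions themselves as (invariant powers of) the coordinates $x_i$, so the irreducible curve cut out on $X$ by the invariant power of $x_i$ is by construction the reduced zero locus of $g_i$, i.e.\ the given $C_i$; the weights $n_i$ then transfer to $C''$ by the definition of $\Gamma^*$. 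With that substitution your argument is complete and coincides with the paper's.
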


\begin{corollary}  If $(X,0)$ has a rational singularity, then any pair $(X,C)$ is an orbifold splice quotient.
\begin{proof}  Since $(X,0)$ is rational, every curve $C_i$ is $\Q$-Cartier.  For each non-special end $E_j$ on the minimal orbifold resolution $\tilde{X}$, choose any end-curve $\tilde{C_j}$.  Its image $C_j$ on $X$ is the zero-set of a function $h_j$, which becomes an end-curve function of $E_j$.  The result now follows from the Theorem.
\end{proof}
\end{corollary}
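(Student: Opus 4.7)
The plan is to reduce immediately to the Orbifold End-Curve Theorem (6.8) and verify its two hypotheses for $(X,C)$. Since $(X,0)$ is rational we have $p_g(X)=0$, so in the Mumford decomposition $\mathrm{Cl}(X)\cong H^1(\tilde{X},\mathcal{O}_{\tilde{X}})\oplus \E^*/\E$ recalled at the start of Section 3, the continuous summand vanishes. Hence $\mathrm{Cl}(X)$ is finite, and every Weil divisor on $X$—in particular each $C_i$—is $\Q$-Cartier. This handles the first condition of Theorem 6.8.

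For the second condition, I would fix a non-special end $E_j$ of the minimal orbifold resolution $\tilde{X}_C$ and produce an end-curve function explicitly. The idea is to first exhibit a candidate end-curve upstairs, then push it down and use rationality to pull it back to a function. Concretely, pick any smooth analytic curve germ $\tilde{C}_j$ on $\tilde{X}_C$ meeting $E$ transversally at a single smooth point of $E_j$ (a transverse disc always exists there). Its image $C_j$ on $X$ is an irreducible Weil divisor; by the previous paragraph it is $\Q$-Cartier, so some positive multiple $mC_j$ is a principal divisor, say cut out by some $h_j \in \m_{X,0}$. Then $h_j\circ \pi$ vanishes only along $E\cup \tilde{C}_j$, so $h_j$ is an end-curve function for $E_j$ in the sense required by Theorem 6.8.

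With both hypotheses verified, Theorem 6.8 concludes that $(X,C)$ is an orbifold splice quotient. I do not expect a genuine obstacle here: all of the real content was already done in the Orbifold End-Curve Theorem and in Okuma's Theorem 6.3, which uses the same $\Q$-Cartier argument to build end-curve functions in the rational case. The only mild point to be careful about is that $\Q$-Cartierness of $C_j$ on $X$ really does yield a regular function $h_j$ whose zero set picks up $\tilde{C}_j$ with multiplicity one on the resolution, but this is immediate from the projection formula and the transversality of $\tilde{C}_j$ with $E_j$.
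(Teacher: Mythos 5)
Your proposal is correct and follows essentially the same route as the paper: rationality forces every $C_i$ (and the image of any chosen transverse disc at a non-special end) to be $\Q$-Cartier, the resulting functions $h_j$ serve as end-curve functions, and the Orbifold End-Curve Theorem (6.8) finishes the argument. The paper's own proof is just a terser version of this, omitting the explicit appeal to $p_g=0$ and the Mumford decomposition that you spell out.
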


Therefore, if $(X,0)$ has a rational singularity and $C\subset X$ is a reduced curve, then modulo taking an abelian quotient,  one can write down from the orbifold resolution diagram $\Gamma$ some explicit equations for the singularity and the components of the curve $C$.  This includes the case $X=\C^2$, where it has been known for a long time (e.g.  \cite{e-n}) how to write down the equation of a singular curve from its resolution diagram.  One should compare as well with Example $3.5$.

\begin{corollary} Consider the pair $(X,\Sigma n_iC_i)$.   Suppose $(X,0)$ is a splice quotient, each $C_i$ is $\Q$-Cartier, and all the non-special ends of $E$ on the minimal orbifold resolution are ends from the minimal good resolution.  Then $(X,C)$ is an orbifold splice quotient.
\begin{proof}  Since $(X,0)$ is a splice quotient, there is an end-curve function for every end of the graph of the minimal good resolution.  Such a function works just as well if that end is also an end on the minimal orbifold resolution.   Now apply the Theorem.
\end{proof}
\end{corollary}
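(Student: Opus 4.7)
The plan is to deduce this corollary as a direct consequence of the Orbifold End-Curve Theorem (Theorem 6.8), using the hypotheses to verify both clauses of condition (2) in that theorem. Since the $\Q$-Cartier assumption on each $C_i$ is given outright, the only substantive task is to produce, for every non-special end $E_j$ of the minimal orbifold resolution $\tilde X_C$, a function on $(X,0)$ whose zero-set on $\tilde X_C$ is an end-curve for $E_j$.

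The strategy is straightforward: produce the end-curve function on the minimal good resolution and then transport it to $\tilde X_C$. Let $(\tilde X_0,E^0)\to(X,0)$ denote the minimal good resolution, and recall that $\tilde X_C$ is obtained from $\tilde X_0$ by a finite sequence of blow-ups along the minimal orbifold resolution process described in (6.4), all of which take place at points arising from singularities or non-transversal intersections of the proper transforms of the $C_i$ with the exceptional divisor. Since $(X,0)$ is a splice quotient, the End-Curve Theorem (Theorem 6.2) supplies, for each end $E_j^0$ of $\Gamma$ (the graph of $\tilde X_0$), a function $h_j\in\mathcal O_{X,0}$ whose zero-set on $\tilde X_0$ is $E^0\cup \tilde D_j$, where $\tilde D_j$ is smooth and meets $E_j^0$ transversally at a single interior point.

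The key observation is that if $E_j$ is an end of $\tilde X_C$ that already appeared as an end of $\tilde X_0$, then by hypothesis no blow-up in the orbifold process affects a neighborhood of $E_j$: indeed, any such blow-up would raise the valence of $E_j^0$ and destroy its status as a leaf of $\Gamma_C$. Hence the proper transform of $\tilde D_j$ under $\tilde X_C\to\tilde X_0$ remains smooth, and it still intersects $E_j$ transversally at an interior point, since the intersection point of $\tilde D_j$ with $E_j^0$ is disjoint from the blow-up centers. Thus the same function $h_j$ is an end-curve function for $E_j$ on $\tilde X_C$.

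Combining this with the $\Q$-Cartier hypothesis on the $C_i$, which automatically supplies end-curve functions for the special ends (via some power of a defining function for $C_i$), we have end-curve functions for every end of $\Gamma_C$ that condition (2) of Theorem 6.8 demands. Applying the Orbifold End-Curve Theorem yields that $(X,C)$ is an orbifold splice quotient. The only point requiring care is the verification that blow-ups in the passage $\tilde X_0\rightsquigarrow \tilde X_C$ do not occur near the non-special leaves of $\Gamma_C$; this is immediate from the hypothesis that these leaves coincide with leaves of the minimal good resolution, but it is the one place where the hypothesis is actually used.
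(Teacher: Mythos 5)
Your proof is correct and follows the same route as the paper: invoke the End-Curve Theorem to get end-curve functions at the ends of the minimal good resolution, observe that these persist as end-curve functions at the coinciding non-special ends of the minimal orbifold resolution, and apply the Orbifold End-Curve Theorem (6.8). The only difference is that you spell out why the transfer works (no blow-up center can lie on a leaf that survives as a leaf of $\Gamma_C$), which the paper leaves implicit.
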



       If $(X,0)$ is a splice-quotient singularity which is not rational, there will likely exist quasi-minimal resolutions for which the semigroup conditions are not satisfied.
        
                  \begin{example}  The minimally elliptic hypersurface singularity $(X,0)$=$\{x^2+y^3+z^7=0\}$ is a splice quotient, with graph and splice diagram for the minimal good resolution       
       $$
\xymatrix@R=8pt@C=30pt@M=0pt@W=0pt@H=0pt{
\\&&\lefttag{\bullet}{-3}{4pt}&&&&\Circ\\
\\
\\
&\undertag{\bullet}{-2}{4pt}\lineto[r]&\undertag{\bullet}{-1}{3pt}\lineto[r]\lineto[uuu]&\undertag{\bullet}{-7}{4pt}&&\Circ\lineto[r]_(.7){2}&\Circ\lineto[r]_(.3){7}\lineto[uuu]_(.3)3&\Circ\\\\
\\
}
$$ 

Blowing up further gives a quasi-minimal resolution graph and splice diagram:

    $$
\xymatrix@R=8pt@C=30pt@M=0pt@W=0pt@H=0pt{
\\&&\lefttag{\bullet}{-3}{4pt}&\lefttag{\bullet}{-1}{3pt}&&&\Circ&&\Circ\\
\\
\\
&\undertag{\bullet}{-2}{4pt}\lineto[r]&\undertag{\bullet}{-1}{3pt}\lineto[r]\lineto[uuu]&\undertag{\bullet}{-9}{4pt}\lineto[uuu]\lineto[r]&\undertag{\bullet}{-1}{3pt}&\Circ\lineto[r]_(.7){2}&\Circ\lineto[r]_(.3){7}\lineto[uuu]_(.3)3&\lineto[r]_(.3){1}&\Circ\lineto[r]_(.3){1}\lineto[uuu]_(.3){1}&\Circ\\\\
\\
}
$$ 
The splice diagram does not satisfy the semigroup conditions.  
      \end{example} 
      One can understand the previous example as follows: A singularity with this graph (there are two of them) is minimally elliptic and has unimodular resolution graph.  Thus the divisor class group is isomorphic to $\C$, so there is no torsion, and $\Q$-Cartier is the same as Cartier.  The only end-curve functions for the MGR on the $-7$ curve are powers of $z$; so on the further blow-up, one cannot have end-curve functions for both the new $-1$ end-curves.

\bigskip

\section {{An example}}
Let $(X,0)$ be the $D_4$ singularity  $\{w^2=xy(x+y)\}$.  Consider the curve $C=pC_1+qC_2$, where $C_1$ is the Weil divisor given by the prime ideal $(y,w)$, and $C_2$ is a curve, equations to be determined, which on the minimal resolution is smooth and intersects tangentially the central curve, to order $2$, away from the other  intersection points.  To go from the familiar minimal resolution of $X$ to an orbifold resolution, one needs to blow up three times so that $\tilde{C_2}$ intersects a unique leaf.  The graph $\Gamma_C^*$ is
 $$
\xymatrix@R=8pt@C=30pt@M=0pt@W=0pt@H=0pt{
&&&&&\\
&&&&&&&&&\\
&&&&x_3&x_4&\\
&&&&&&\\
&&&&\righttag{\bullet}{}{6pt}&\righttag{\bullet}{}{6pt}&&&\\
&&&x_2&\lineto[u]&\lineto[u]&x_5&\\
&&\undertag{~}{p}{12pt}&\ar[l]\undertag{\bullet}{}{4pt}\lineto[r]&\overtag{\bullet}{-4}{6pt}\lineto[r]\lineto[u]\lineto[d]&\undertag{\bullet}{}{4pt}\lineto[r]\lineto[u]&\undertag{\bullet}{-1}{4pt}\lineto[r]&\undertag{\ar}{q}{12pt}\\
&&&&\lineto[u]&&\\
&&&&\undertag{\bullet}{}{6pt}\lineto[u]&&\\
&&&&x_1&\\
}
$$      
The representation on $\C^5$ of $D(\Gamma)$ is the four-group, acting only on the $x_1,x_2,x_3$ coordinates, by multiplying two of the variables by $-1$.  There will be $2$ splice diagram equations coming from the valence $4$ node (locating the cross-ratio of the fourth point), and one from the other.  One set of equations (there are many) on which the group acts equivariantly is
$$x_4-x_2^2-x_3^2=0$$
$$x_1^2+x_2^2-x_3^2=0$$
$$x_5-x_4^2-x_1x_2x_3(x_1^2+x_2^2+x_3^2)=0.$$   
Invariants for the group action are $x=x_1^2, y=x_2^2, z=x_3^2, w=x_1x_2x_3$, and $x_4$ and $x_5$, and these are related by
$$x_4=y+z;\ x+y-z=0;\ x_5=x_4^2+w(x+y+z).$$
Using $xyz=w^2$, the quotient is the familiar equation $w^2=xy(x+y)$.  The curve $C_2$ turns out to be a principal (i.e., Cartier) divisor defined by
$x_5=(x+2y)^2+2w(x+y).$  $x_2=0$ defines $C_1'$, so $x_2^2=y$ is an end-curve function.

To pass to the UALC, in all equations one replaces $x_2$ by $z_2^p$,  $x_5$ by $z_5^q$, and $x_1,x_3,x_4$ by $z_1,z_3,z_4$ respectively.  The orbifold homology group in $(\C^*)^5$ is generated by $(1/2)[1,0,1,0,0]; (1/2p)[p,1,0,0,0]$; and $ (1/q)[0,0,0,0,1],$  hence is the product of three cyclic groups of orders $2$, $2p$, and $q$, respectively.  Here we have used the familiar notation $(1/n)[q_1,\cdots,q_k]$ to denote the diagonal group generated by $(\zeta^{q_1},\cdots, \zeta^{q_k})$, where $\zeta$ is a primitive $n^{th}$ root of $1$.

\end{document}